\newtheorem{thm}{\protect\theoremname}
\newtheorem{defn}[thm]{\protect\definitionname}
\newtheorem{prop}[thm]{\protect\propositionname}
\newtheorem{rem}[thm]{\protect\remarkname}
\newtheorem{col}[thm]{\protect\corollaryname}
\newtheorem{ass}[thm]{\protect\assumptionname}
\providecommand{\definitionname}{Definition}
\providecommand{\propositionname}{Proposition}
\providecommand{\remarkname}{Remark}
\providecommand{\theoremname}{Theorem}
\providecommand{\lemmaname}{Lemma}
\providecommand{\corollaryname}{Corollary}
\providecommand{\conjecturename}{Conjecture}
\providecommand{\assumptionname}{Assumption}
\begin{document}

\title{Suboptimal Stabilizing Controllers for Linearly Solvable System}

\author{Yoke Peng Leong, Matanya B. Horowitz, and Joel W. Burdick 
\thanks{Y. P. Leong and M. B. Horowitz are with the Control and Dynamical Systems, California Institute of Technology, 
Pasadena, CA 91125, USA {\tt\small ypleong@caltech.edu}, {\tt\small mhorowit@caltech.edu}}%
\thanks{J. W. Burdick is with the Mechanical Engineering, California Institute of Technology, Pasadena, CA 91125, USA {\tt\small jwb@robotics.caltech.edu}}
}

\maketitle
\thispagestyle{empty}

\begin{abstract}

This paper presents a novel method to synthesize stochastic control Lyapunov functions for a class of nonlinear, stochastic control systems. In this work, the classical nonlinear Hamilton-Jacobi-Bellman partial differential equation is transformed into a linear partial differential equation for a class of systems with a particular constraint on the stochastic disturbance. It is shown that this linear partial differential equation can be relaxed to a linear differential inclusion, allowing for approximating polynomial solutions to be generated using sum of squares programming. It is shown that the resulting solutions are stochastic control Lyapunov functions with a number of compelling properties. In particular, a-priori bounds on trajectory suboptimality are shown for these approximate value functions. The result is a technique whereby approximate solutions may be computed with non-increasing error via a hierarchy of semidefinite optimization problems. 
\end{abstract}

\section{INTRODUCTION} \label{sec:intro}

The stabilization of nonlinear systems is a central problem in control engineering.  Lyapunov theory, wherein an energy-like function is used to show that some measure of distance from a stability point decays over time, is a critical tool for studying the convergence properties of a given system. 
Lyapunov theory may be generalized from analysis to synthesis of control systems using Control Lyapunov Function (CLF) \cite{freeman1996control}. However, the synthesis of a CLF for general systems
remains a challenging open question, due to the bilinearity between the Lyapunov function and control input in the Lyapunov equation.
%
%

A complementary and related domain in control engineering is the study of the Hamilton-Jacobi-Bellman (HJB) equation, a partial differential equaiton that governs the optimal control of a system. Methods to calculate the solution to the HJB equation via semidefinite programming have been proposed previously by Lasserre et al. \cite{lasserre2008nonlinear}. 
In this work, we propose an alternative line of study based on the linear structure of a particular form of the HJB equation. Since the late 1970s, researchers \cite{fleming1982logarithmic,holland1977new,dai1991stochastic,filliger2005relative} have made connections between stochastic optimal control and reaction-diffusion equation through a logarithmic transformation. This line of research has recently been the subject of focused study by Kappen \cite{Kappen:2005bn} and Todorov \cite{Todorov:2009wja}. 
These results have been developed in a number of compelling directions \cite{Stulp:2012jb, Dvijotham:2012tv, van2008graphical,rutquist2014hjb,shah2015dynamics}.

This paper combines these previously disparate fields of dynamic programming and Lyapunov theory by considering the value function, the solution to a stochastic HJB equation, as a Stochastic CLF (SCLF). The HJB solution is global, in that it incorporates all potential initial system states, and optimal. Here, we propose polynomial candidate approximate solutions to the HJB, extending recently developed tools in polynomial optimization to a new class of problems. It is already known that the solution to the deterministic HJB is in fact a CLF \cite{Primbs:1999tt}. This paper shows that our approximated value function solutions are SCLFs as well. 

A preliminary version of this work appeared in \cite{horowitz2014semidefinite} and \cite{horowitz2014admm}, where the use of semidefinite relaxations for solving the HJB were first considered. However, the stabilization properties of the resulting solutions were not investigated. Instead, these previous works focused on HJB solutions for path planning problems, and did not have guarantees on trajectory performance when using approximate solutions to the HJB. 


The rest of this paper is organized as follows. Section \ref{sec:background} reviews the linearly solvable HJB equations, control Lyapunov functions, and sum of squares programming.  Section \ref{sec:problem} introduces a relaxed formulation of the HJB solutions which is efficiently computable using the sum of squares methodology.  Section \ref{sec:analysis} analyzes the properties of the relaxed solutions, such as approximation errors relative to the exact solutions. This section also shows that the relaxed solutions are SCLFs, and that the resulting controller is stabilizing.  
An example is presented in Section \ref{sec:simulation} to illustrate the optimization technique and its performance. Section \ref{sec:conclusion} summarizes the findings of this work and discusses future research directions.

\section{BACKGROUND} \label{sec:background} 

This section briefly describes the notation and reviews necessary background on the linear HJB equation, SCLF, and SOS programming.

\subsection{Notation} \label{sec:notation}


\begin{table}[t]
	\centering
	\caption{Set notation}\label{tab:notation}
\begin{tabular}{p{0.06\textwidth}p{0.38\textwidth}}
	\hhline{==} 
	Notation & Definition \\
	\hline 
        $\mathbb{Z}_+$ & All positive integers \\
	$\mathbb{R}$ & All real numbers \\
	$\mathbb{R}_+$ & All nonnegative real numbers \\
	$\mathbb{R}^n$ & All $n$-dimensional real vectors \\
	$\mathbb{R}[x]$ & All real polynomial functions in $x$ \\
	$\mathbb{R}^{n \times m}$ & All $n \times m$ real matrices\\
	$\mathbb{R}^{n \times m}[x]$ & All $M \in \mathbb{R}^{n \times m}$ such that $M_{i,j} \in \mathbb{R}[x]~ \forall~i,j$\\
	$\mathcal{K}$ & All continuous nondecreasing functions $\mu: \mathbb{R}_+ \to \mathbb{R}_+$ such that  $\mu(0) = 0$, $\mu(r ) > 0$ if $r > 0$, and $\mu(r) \ge \mu(r')$ if $r > r'$ \\
	$\mathcal{C}^{k,k'}$ & All functions $f$ such that $f$ is $k$-differentiable with respect to the first argument and $k'$-differentiable with respect to the second argument \\
	\hhline{==}
\end{tabular}
\end{table}

Table \ref{tab:notation} summarizes the notation of different sets used in this work. A point on a trajectory, $x(t)\in\mathbb{R}^n$, at time $t$ is denoted $x_{t}$, while the segment of this trajectory over the interval $[t,T]$ is denoted by $x_{[t,T]}$.  

A compact domain in $\mathbb{R}^n$ is denoted as $\Omega$ where $\Omega \subset \mathbb{R}^n$, and its boundary is denoted as $\partial \Omega$. A domain $\Omega$ is a \emph{basic closed semialgebraic} set if there exists $g_i(x) \in \mathbb{R}[x]$ for $ i = 1,2,\ldots,m$ such that $\Omega = \{x \mid g_i(x) \ge 0 ~ \forall i = 1,2,\ldots,m\}$.  

Given a polynomial $p(x)$, $p(x)$ is positive on domain $\Omega$ if $p(x) >0 ~\forall x \in \Omega$, $p(x)$ is nonnegative on domain $\Omega$ if $p(x) \ge 0 ~\forall x \in \Omega$, and $p(x)$ is positive definite on domain $\Omega$ where $0 \in \Omega$, if $p(0) = 0$ and $p(x) >0$ for all $x \in \Omega \backslash \{0\}$.

If it exists, the infinity norm of a function is defined as $\norm{f}_\infty = \sup_x |f(x)|$ for $x\in\Omega$. To improve readability, a function, $f(x_1,\ldots,x_n)$, is abbreviated as $f$ when the arguments of the function are clear from the context.

\subsection{Linear Hamilton-Jacobi-Bellman (HJB) Equation}\label{sec:hjb}

Consider the following affine nonlinear dynamical system,
  \begin{equation} \label{eq:stochastic-dynamics}
    dx_t = \left(f(x_t) + G(x_t)u_t\right)dt + B(x_t) \, d\omega_t
  \end{equation}
where $x_t\in\Omega$ is the state at time $t$ in a compact state space domain $\Omega \subset \mathbb{R}^{n}$,  $u_t \in \mathbb{R}^{m}$ is the control input, $f(x)\in \mathbb{R}^n[x]$, $G(x)\in \mathbb{R}^{n \times m}[x]$,  $B(x)\in \mathbb{R}^{n \times l}[x] $ are real polynomial functions of the state variables $x$, and $\omega_{t} \in \mathbb{R}^{l}$ is a vector consisting of Brownian motions with covariance $\Sigma_\epsilon$, i.e., $\omega^i_t$ has independent increments with $\omega^i_{t}-\omega^i_{s}\sim\mathcal{N}(0,\Sigma_\epsilon(t-s))$, for $\mathcal{N}\left(\mu,\sigma^{2}\right)$ a normal distribution. The domain $\Omega$ is assumed to be a basic closed semialgebraic set defined as $\Omega = \{x \mid  g_i(x) \in \mathbb{R}[x], g_i(x) \ge 0 ~ \forall i = 1,2,\ldots,m\}$.  Without loss of generality, let $0 \in \Omega$ and $x=0$ be the equilibrium point, whereby $f(0) = 0$, $G(0) = 0$ and $B(0) = 0$. 

The goal is to minimize the following functional,
  \begin{gather} \label{eq:cost-functional}
       \mathbb{E}_{\omega_t}[J(x,u)]=\mathbb{E}_{\omega_t}\left[\phi(x_{T})+\int_{0}^{T}q(x_t)+\frac{1}{2}u_{t}^{T}Ru_{t} dt\right]
  \end{gather}
subject to (\ref{eq:stochastic-dynamics}), where $\phi \in \mathbb{R}[x]$, $\phi: \Omega \to \mathbb{R}_+$ represents a state-dependent terminal cost, $q \in \mathbb{R}[x]$, $q: \Omega \to \mathbb{R}_+ $ is state dependent cost, and $R \in \mathbb{R}^{m \times m}$ is a positive definite matrix. $T$, unknown a priori, is the time at which the system reaches the domain boundary or the origin. This problem is generally called the \emph{first exit} problem. The expectation $\mathbb{E}_{\omega_t}$ is taken over all realizations of the noise $\omega_t$. For stability of the resultant controller to the origin, $q$ and $\phi$ are also required to be positive definite functions. 
The solution to this minimization problem is known as the \emph{value function}, $V: \Omega \to \mathbb{R}_+$, where beginning from an initial point
$x_t$ at time $t$
  \begin{equation} \label{eq:value-def}
   V\left(x_t\right)=\min_{u_{[t,T]}}\mathbb{E}_{\omega_t}\left[J\left(x_{[t,T]},u_{[t,T]}\right)\right].
  \end{equation}

Based on dynamic programming arguments \cite[Ch.~III.7]{Fleming:2006tl}, the HJB equation
associated with this problem is a nonlinear, second order partial differential equation (PDE)
 \begin{multline}  \label{eq:hjb-pde-value}
     0 =  q+\left(\nabla_{x}V\right)^{T}f-\frac{1}{2}
           \left(\nabla_{x}V\right)^{T}GR^{-1}G^{T}\left(\nabla_{x}V\right)\\
                    +  \frac{1}{2}Tr\left(\left(\nabla_{xx}V\right)B\Sigma_{\epsilon}B^{T}\right)
  \end{multline}
with boundary condition $V(x) = \phi(x)$ and the optimal control effort takes the form
  \begin{equation}\label{eq:optimal-u}
        u^{*}=-R^{-1}G^{T}\nabla_x V.
  \end{equation}
For the stabilization problem on a compact domain, it is appropriate to set the boundary condition to be $\phi(x)=0$ for $x=0$, indicating zero cost accrued for achieving the origin, and $\phi(x)>0$ for $x \in \partial \Omega \setminus \{0\}$. In practice, $\phi(x)$ at the exterior boundary is usually chosen to be a large number depending on the applications to impose large penalty for exiting the predefined domain.

In general, \eqref{eq:hjb-pde-value} is difficult to solve due to its nonlinearity. However, with the assumption that there
exists a $\lambda>0$ and a control penalty cost $R$ in \eqref{eq:cost-functional} satisfying
  \begin{equation} \label{eq:noise-assumption}
    \lambda G(x_t)R^{-1}G(x_t)^{T}=B(x_t)\Sigma_{\epsilon}B(x_t)^{T}\triangleq \Sigma(x_t) \triangleq  \Sigma_{t},
   \end{equation}
and using the logarithmic transformation 
  \begin{equation} \label{eq:log-transform}
      V=-\lambda\log\Psi,
  \end{equation}
it is possible \cite{Todorov:2009wja,Kappen:2005bn}, after substitution and simplification, to obtain the following {\em linear}
PDE from (\ref{eq:hjb-pde-value}):
  \begin{gather} 
   0 = -\frac{1}{\lambda}q\Psi+f^{T}(\nabla_{x}\Psi)
      + \frac{1}{2}Tr\left(\left(\nabla_{xx}\Psi\right)\Sigma_{t}\right) \quad x \in \Omega \nonumber \\
   \Psi(x) = e^{- \frac{\phi(x)}{\lambda}}  \quad x \in \partial \Omega.
	 \label{eq:hjb-pde}
  \end{gather}
This transformation of the value function has been deemed the \emph{desirability} function \cite{Todorov:2009wja}. 
For brevity, define the following expression 
  \begin{equation*}\label{eq:L-psi}
     \mathcal{L}(\Psi)\triangleq f^{T}(\nabla_{x}\Psi)+\frac{1}{2}Tr\left(\left(\nabla_{xx}\Psi\right)\Sigma_{t}\right)
  \end{equation*}
and the function $\psi(x)$ at the boundary as  
\begin{equation*}
  \psi(x) \triangleq e^{- \frac{\phi(x)}{\lambda}} \quad x \in \partial \Omega.
\end{equation*}
%

Condition (\ref{eq:noise-assumption}) restricts the design of the control penalty $R$, such that control effort is highly penalized in subspaces with little noise, and lightly penalized in those with high noise. A specific case for which this condition is satisfied is for systems in which $B(x_t)=G(x_t)$. Additional discussion is given in \cite{Todorov:2009wja}.

\subsection{Stochastic Control Lyapunov Functions (SCLF)}\label{sec:clf}

Before the stochastic control Lyapunov function (SCLF) is introduced, the definitions for two forms of stability are provided, following the definitions in \cite[Ch. 5]{khasminskii2011stochastic}. 

\begin{defn} \label{def:stability_probability}
	Given \eqref{eq:stochastic-dynamics}, the equilibrium point at $x=0$ is stable in probability for $t\ge 0$ if for any $s\ge 0$ and $\epsilon > 0$,
	\begin{equation*}
		\lim_{x\to 0} P\left\{\sup_{t>s} |X^{x,s}(t)| > \epsilon \right\} =0
	\end{equation*}
	where $X^{x,s}$ is the trajectory of \eqref{eq:stochastic-dynamics} starting from $x$ at time $s$.
\end{defn}

Intuitively, Definition \ref{def:stability_probability} is similar to the notion of stability for deterministic systems. 
The following is a stronger stability definition that is similar to the notion of asymptotic stability for deterministic systems.

\begin{defn} \label{def:asymp_stability_probability}
	Given \eqref{eq:stochastic-dynamics}, the equilibrium point at $x=0$ is asymptotically stable in probability if it is stable in probability and 
	\begin{equation*}
		\lim_{x\to 0} P\left\{\lim_{t \to \infty} |X^{x,s}(t)| = 0 \right\} =1
	\end{equation*}
	where $X^{x,s}$ is the trajectory of \eqref{eq:stochastic-dynamics} starting from $x$ at time $s$.
\end{defn}

For stochastic systems, the SCLF and Lyapunov theorems are defined as follows. 
\begin{defn}  \label{def:stochastic_clf}
A stochastic control Lyapunov function (SCLF) for system (\ref{eq:stochastic-dynamics}) is a positive
definite function $\mathcal{V} \in \mathcal{C}^{2,1}$ on a compact domain $\mathcal{O} =   \Omega \cup \{0\}  \times \{t>0\}$ such that 
\begin{gather*}
	\mathcal{V}(0,t) = 0, \quad \mathcal{V}(x,t)\ge \mu(|x|) \quad \forall ~t \nonumber\\
	\exists~ u(x,t) \mbox{ s.t. } L(\mathcal{V}(x,t)) \leq 0  \quad \forall~(x,t) \in \mathcal{O} \backslash \{(0, t)\} \label{eq:sclf}
\end{gather*}
where $\mu \in \mathcal{K}$, and
    \begin{multline}
		L(\mathcal{V}) =\partial_t \mathcal{V}+ \nabla_x \mathcal{V}^T (f + Gu)+ \frac{1}{2}  Tr((\nabla_{xx} \mathcal{V}) B \Sigma_\epsilon B^T). \label{eq:LV}
    \end{multline}
\end{defn}

\begin{thm}\label{thm:stochastic_clf} \cite[Thm. 5.3]{khasminskii2011stochastic}
	For system \eqref{eq:stochastic-dynamics}, assume that there exists a SCLF and a $u$ defined in Definition \ref{def:stochastic_clf}. Then, the equilibrium point $x = 0$ is stable in probability, and $u$ is a stabilizing controller. 
\end{thm}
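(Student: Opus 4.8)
The plan is to realize $\mathcal{V}$ as a nonnegative supermartingale along closed-loop trajectories and then convert the supermartingale inequality into the probability bound demanded by Definition \ref{def:stability_probability}. Substituting the stabilizing feedback $u(x,t)$ guaranteed by Definition \ref{def:stochastic_clf} into \eqref{eq:stochastic-dynamics} produces a closed-loop diffusion whose infinitesimal generator, acting on $\mathcal{V}$, is exactly the operator $L$ of \eqref{eq:LV}. Hence the hypothesis $L(\mathcal{V}) \le 0$ on $\mathcal{O} \setminus \{(0,t)\}$ says precisely that $\mathcal{V}(X_t,t)$ has nonpositive drift.

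First I would fix $s \ge 0$ and $\epsilon > 0$ small enough that the closed ball $\overline{B_\epsilon} = \{|x| \le \epsilon\}$ lies in the interior of $\Omega$, and introduce the exit time $\tau_\epsilon = \inf\{t > s : |X^{x,s}(t)| \ge \epsilon\}$. Applying Dynkin's formula (It\^o's formula in expectation) to $\mathcal{V}$ along the process stopped at $t \wedge \tau_\epsilon$ gives
\begin{equation*}
  \mathbb{E}\left[\mathcal{V}(X_{t \wedge \tau_\epsilon}, t \wedge \tau_\epsilon)\right] = \mathcal{V}(x,s) + \mathbb{E}\left[\int_s^{t \wedge \tau_\epsilon} L(\mathcal{V})(X_r, r)\, dr\right] \le \mathcal{V}(x,s),
\end{equation*}
where the inequality uses $L(\mathcal{V}) \le 0$, so the stopped process is a nonnegative supermartingale.

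Next I would exploit the lower bound $\mathcal{V}(x,t) \ge \mu(|x|)$ with $\mu \in \mathcal{K}$. By path continuity, on the event $\{\tau_\epsilon \le t\}$ the trajectory satisfies $|X_{\tau_\epsilon}| = \epsilon$, so $\mathcal{V}(X_{\tau_\epsilon}, \tau_\epsilon) \ge \mu(\epsilon) > 0$. Restricting the expectation above to this event and letting $t \to \infty$ yields
\begin{equation*}
  \mu(\epsilon)\, P\left\{\sup_{t > s} |X^{x,s}(t)| \ge \epsilon\right\} \le \mathcal{V}(x,s),
\end{equation*}
that is, $P\{\sup_{t>s}|X^{x,s}(t)| \ge \epsilon\} \le \mathcal{V}(x,s)/\mu(\epsilon)$. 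Since $\mathcal{V}$ is continuous and positive definite with $\mathcal{V}(0,s) = 0$, the right-hand side tends to $0$ as $x \to 0$, which is stability in probability and therefore confirms that $u$ is a stabilizing controller.

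The main obstacle will be the rigorous justification of Dynkin's formula under stopping: the naive application requires the stochastic integral term $\int (\nabla_x \mathcal{V})^T B\, d\omega$ to be a genuine martingale, which can fail without integrability control. I would resolve this by a localization argument, stopping additionally at the exit time of a large ball and using the $\mathcal{C}^{2,1}$ regularity of $\mathcal{V}$ together with the polynomial (hence locally bounded) coefficients $f$, $G$, and $B$ to make each stopped integrand bounded, then passing to the limit by monotone convergence. A secondary subtlety is guaranteeing that $\overline{B_\epsilon}$ avoids $\partial \Omega$, so that the first-exit dynamics of \eqref{eq:stochastic-dynamics} are unaffected near the origin; this holds for all sufficiently small $\epsilon$ because $0$ is interior to $\Omega$.
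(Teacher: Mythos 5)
The paper does not prove this theorem at all: it is imported by citation from Khasminskii (Thm.\ 5.3 of \cite{khasminskii2011stochastic}), so there is no in-paper proof to compare against. Your supermartingale argument --- stopping at the exit time of a small ball, applying Dynkin's formula with $L(\mathcal{V})\le 0$, and converting the bound $\mu(\epsilon)\,P\{\tau_\epsilon<\infty\}\le \mathcal{V}(x,s)$ into stability in probability via continuity of $\mathcal{V}$ at the origin --- is precisely the classical proof given in that reference, and it is correct, including the localization needed to justify the martingale property of the stochastic integral and the choice of $\epsilon$ small enough that the ball stays inside $\Omega$.
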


To achieve the stronger condition of asymptotic stability in probability, we have the following result.
\begin{thm}\label{thm:stochastic_clf_asymp}\cite[Thm. 5.5 and Cor. 5.1]{khasminskii2011stochastic}
	For system \eqref{eq:stochastic-dynamics}, suppose that in addition to the existence of a SCLF and a $u$ defined in Definition \ref{def:stochastic_clf}, $u$ is time-invariant,
	\begin{gather*}
		\mathcal{V}(x,t) \le \mu'(|x|) \quad \forall ~t \\ 
		\quad L(\mathcal{V}(x,t)) < 0 \quad \forall~(x,t) \in \mathcal{O} \backslash \{(0, t)\}
	\end{gather*}
	where $\mu' \in \mathcal{K}$.
	Then, the equilibrium point $x = 0$ is asymptotically stable in probability, and $u$ is an asymptotically stabilizing controller.
\end{thm}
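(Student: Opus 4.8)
The plan is to leverage the stability in probability already furnished by Theorem~\ref{thm:stochastic_clf} and upgrade it to asymptotic stability in probability by exploiting the \emph{strict} decrease condition $L(\mathcal{V}) < 0$ together with a supermartingale convergence argument of LaSalle type. Throughout I work with the closed-loop system obtained by substituting the time-invariant controller $u(x)$ into \eqref{eq:stochastic-dynamics}. With this substitution the state process is a time-homogeneous diffusion, and by \eqref{eq:LV} the quantity $L(\mathcal{V})$ is precisely the result of the (time-augmented) infinitesimal generator acting on $\mathcal{V}$.

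First I would apply Dynkin's formula (It\^o's rule for $\mathcal{V}\in\mathcal{C}^{2,1}$) along trajectories starting from $x$, giving, for any stopping time $\tau$,
\begin{equation*}
	\mathbb{E}\left[\mathcal{V}(X^{x,s}(t\wedge\tau))\right] = \mathcal{V}(x) + \mathbb{E}\left[\int_0^{t\wedge\tau} L(\mathcal{V})(X^{x,s}(r))\,dr\right].
\end{equation*}
Because $L(\mathcal{V})\le 0$ and $\mathcal{V}\ge 0$, the process $\mathcal{V}(X^{x,s}(t))$ is a nonnegative supermartingale (up to the exit time from $\Omega$), so by the martingale convergence theorem it converges almost surely to a nonnegative limit $\mathcal{V}_\infty$. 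Rearranging and letting $t\to\infty$ yields the integrability estimate $\mathbb{E}\left[\int_0^\infty (-L(\mathcal{V}))(X^{x,s}(r))\,dr\right]\le \mathcal{V}(x) < \infty$, hence $\int_0^\infty (-L(\mathcal{V}))(X^{x,s}(r))\,dr < \infty$ almost surely.

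Second, I would use this finiteness to force the trajectory toward the origin, which is the crux of the argument. Fix $0 < \delta < \epsilon$ and consider the annular region $A_{\delta,\epsilon} = \{\delta \le |x| \le \epsilon\}$; since $-L(\mathcal{V})$ is continuous and strictly positive on the compact set $A_{\delta,\epsilon}$, it is bounded below there by some $c_{\delta,\epsilon} > 0$. If a trajectory were to remain outside the ball of radius $\delta$ for an unbounded amount of time while staying inside $\Omega$, it would accumulate an infinite integral of $-L(\mathcal{V})$, contradicting the almost sure finiteness above. Combining this with the a.s.\ convergence of $\mathcal{V}(X^{x,s}(t))$ and the lower bound $\mathcal{V}(x)\ge\mu(|x|)$ (so that $\mathcal{V}_\infty = 0$ forces $|X^{x,s}(t)|\to 0$), one concludes $\lim_{t\to\infty}|X^{x,s}(t)| = 0$ on the event that the trajectory never exits $\Omega$.

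Finally, to obtain the precise form $\lim_{x\to 0} P\{\lim_{t\to\infty}|X^{x,s}(t)| = 0\} = 1$, I would invoke the upper bound $\mathcal{V}(x)\le\mu'(|x|)$: as $x\to 0$ the supermartingale starts arbitrarily small, so by the maximal inequality for supermartingales the probability that $\mathcal{V}(X^{x,s}(t))$ ever reaches a level forcing an exit from $\Omega$ tends to $0$, and correspondingly the probability of the ``stays in $\Omega$ and converges to the origin'' event tends to $1$. The main obstacle is the LaSalle-type step: $-L(\mathcal{V})$ degenerates to $0$ at the origin, so it cannot be bounded below uniformly, and one must carefully use stability in probability to confine the trajectory to a compact annulus with high probability before the positive lower bound $c_{\delta,\epsilon}$ can be applied. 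Handling this near-origin degeneracy, and making the accumulation argument rigorous for the continuous-time diffusion, is where the real work lies; the time-invariance hypothesis on $u$ is precisely what makes the process homogeneous enough for this recurrence reasoning to go through.
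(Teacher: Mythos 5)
This theorem is not proved in the paper at all --- it is quoted by citation to Khasminskii (Thm.~5.5 and Cor.~5.1), so the only proof to compare against is the classical one in that reference, and your sketch is a faithful reconstruction of exactly that argument. Your sequence of steps --- the stopped nonnegative supermartingale property of $\mathcal{V}(X^{x,s}(t))$ via Dynkin's formula, a.s.\ convergence plus finiteness of $\int_0^\infty (-L(\mathcal{V}))\,dr$, the annulus/LaSalle step using strict negativity of $L(\mathcal{V})$ away from the origin, and the final maximal-inequality step driven by $\mathcal{V}(x)\le\mu'(|x|)$ to push the confinement probability to $1$ as $x\to 0$ --- is essentially the same approach as the cited source, including your correct identification of the near-origin degeneracy as the delicate point.
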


\subsection{Sum of Squares (SOS) Programming}

This section provides a brief review of SOS programming, the tool by which we will use to generate approximate solutions to the HJB equation.
A complete introduction to the subject of SOS programming is available in \cite{Parrilo:2003fh}.
\begin{defn} \label{def:SOS}
A multivariate polynomial $f(x)$ is a sum of squares (SOS) if there
exist polynomials $f_{0}(x),\ldots,f_{m}(x)$ such that
  \[ f(x)=\sum_{i=0}^{m}f_{i}^{2}(x). \]
  The set of SOS polynomials in $x$ is denoted as $\mathbb{S}[x]$.
\end{defn}
A sufficient condition for non-negativity of a polynomial $f(x)$ is that $f(x)\in \mathbb{S}[x]$. This seemingly simple fact is compelling, as testing the membership of a polynomial in $\mathbb{S}[x]$ may be performed as a convex problem \cite{Parrilo:2003fh}.
%

\begin{thm}\cite[Thm. 3.3]{Parrilo:2003fh} \label{thm:sos-test-sdp}
	The existence of a SOS decomposition of a polynomial in $n$
	variables of degree $2d$ can be decided by solving a semidefinite programming (SDP) feasibility
	problem. 
\end{thm}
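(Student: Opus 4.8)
The plan is to establish the equivalence between the SOS property and the existence of a positive semidefinite (PSD) Gram matrix, and then to observe that searching for such a matrix subject to the polynomial identity is precisely an SDP feasibility problem. First I would introduce the vector of monomials $z(x)$ consisting of all monomials in $x_1,\ldots,x_n$ of degree at most $d$; this vector has length $N = \binom{n+d}{d}$. The central claim is that $f(x) \in \mathbb{S}[x]$ if and only if there exists a symmetric matrix $Q \in \mathbb{R}^{N \times N}$ with $Q \succeq 0$ such that $f(x) = z(x)^T Q z(x)$ identically in $x$.

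For the forward direction, suppose $f = \sum_{i=0}^m f_i^2$. Since $\deg f = 2d$, each $f_i$ has degree at most $d$, so $f_i(x) = a_i^T z(x)$ for a coefficient vector $a_i \in \mathbb{R}^N$. Then $f = \sum_i (a_i^T z)^2 = z^T \left( \sum_i a_i a_i^T \right) z$, and $Q = \sum_i a_i a_i^T$ is manifestly PSD. For the converse, given $Q \succeq 0$ I would take a spectral (or Cholesky) factorization $Q = \sum_k \lambda_k v_k v_k^T$ with $\lambda_k \ge 0$, yielding $f = \sum_k (\sqrt{\lambda_k}\, v_k^T z)^2$, an explicit SOS decomposition.

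The second step is to turn the polynomial identity $f(x) = z(x)^T Q z(x)$ into linear constraints on the entries of $Q$. Expanding, $z^T Q z = \sum_{\alpha,\beta} Q_{\alpha\beta}\, x^{\alpha+\beta}$, so the coefficient of each monomial $x^\gamma$ equals $\sum_{\alpha+\beta=\gamma} Q_{\alpha\beta}$; equating this with the known coefficient of $x^\gamma$ in $f$ produces one linear equation per monomial $\gamma$. Thus deciding $f \in \mathbb{S}[x]$ is equivalent to finding a symmetric $Q$ that satisfies this finite system of affine equalities together with the cone constraint $Q \succeq 0$ --- exactly the feasibility problem of a semidefinite program, which settles the claim.

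I expect the only delicate point to be the non-uniqueness of $Q$: many matrices represent the same polynomial, because the monomials satisfy $x^{\alpha+\beta} = x^{\alpha'+\beta'}$ whenever $\alpha+\beta = \alpha'+\beta'$. Consequently the identity pins down $Q$ only up to an affine subspace, so the SDP searches over that subspace intersected with the PSD cone rather than over a single candidate. This is not an obstruction to the reduction --- indeed it is precisely why a feasibility problem, rather than a closed-form check, is the correct formulation --- but it is the step that requires care in stating the coefficient-matching constraints correctly.
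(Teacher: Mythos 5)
Your proposal is correct: the Gram-matrix equivalence (with the degree bound $\deg f_i \le d$ justified by the fact that the top-degree parts of the $f_i$ cannot cancel in a sum of squares) plus coefficient matching is exactly the standard argument, and the paper itself offers no proof of this statement --- it imports it verbatim from the cited reference \cite{Parrilo:2003fh}, where it is proved by precisely this reduction. Your closing remark on the non-uniqueness of $Q$ correctly identifies why the problem is an SDP feasibility search over an affine slice of the PSD cone rather than a single matrix test.
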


Hence, by adding SOS constraints to the set of all positive polynomials, testing nonnegativity of a polynomial becomes a tractable SDP problem. The converse question, is a nonnegative polynomial necessarily a SOS, is unfortunately false, indicating that this test is conservative \cite{Parrilo:2003fh}. Nonetheless, SOS feasibility is sufficiently powerful for our purposes.
 
Theorem \ref{thm:sos-test-sdp} guarantees a tractable procedure to determine whether a particular polynomial, possibly parameterized, is a SOS polynomial. Our method combines multiple polynomial constraints into an optimization formulation. 
To do so, we need to define the following polynomial set.
\begin{defn}
	The preordering of polynomials $g_i(x) \in \mathbb{R}[x]$ for $i = 1,2,\ldots, m$ is the set 
	\begin{multline}
		P(g_1,\ldots,g_m) \\= \left\{\left. \sum_{\nu \in \{0,1\}^m} s_\nu(x)g_1(x)^{\nu_1} \cdots g_m(x)^{\nu_m}  \right\arrowvert s_{\nu} \in \mathbb{S}[x]\right\}.
	\end{multline}
\end{defn}

The following proposition is useful to incorporate the domain $\Omega$ in our optimization formulation later.
\begin{prop}\label{lem:positive}
	Given $f(x) \in \mathbb{R}[x]$, if $f(x) \in P(g_1,\ldots,g_m)$, on the domain $\Omega = \{x \mid  g_i(x) \in \mathbb{R}[x], g_i(x) \ge 0, i \in \{ 1,2,\ldots,m\}\}$, then $f(x)$ is nonnegative on $\Omega$. If there exists another polynomial $f'(x)$ such that $f'(x) \ge f(x)$, then $f'(x)$ is also nonnegative on $\Omega$.
\end{prop}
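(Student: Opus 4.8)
The plan is to prove the statement by a direct appeal to the definition of the preordering $P(g_1,\ldots,g_m)$ combined with the elementary fact, noted immediately after Definition \ref{def:SOS}, that any polynomial in $\mathbb{S}[x]$ is nonnegative on all of $\mathbb{R}^n$. No machinery beyond these two observations is needed, so the argument is essentially a bookkeeping exercise in tracking signs.

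First I would fix an arbitrary point $x \in \Omega$; by the defining inequalities of $\Omega$, this yields $g_i(x) \ge 0$ for every $i \in \{1,\ldots,m\}$. Since $f \in P(g_1,\ldots,g_m)$, I would write $f$ in its defining form
\[
    f(x) = \sum_{\nu \in \{0,1\}^m} s_\nu(x)\, g_1(x)^{\nu_1}\cdots g_m(x)^{\nu_m}, \qquad s_\nu \in \mathbb{S}[x].
\]
The next step is to argue that each summand is nonnegative at this $x$. The factor $s_\nu(x)$ is nonnegative because $s_\nu$ is a sum of squares; the remaining factor $g_1(x)^{\nu_1}\cdots g_m(x)^{\nu_m}$ is a product in which each exponent $\nu_i$ is either $0$ or $1$, so every factor is either the constant $1$ or the nonnegative number $g_i(x)$, making the whole product nonnegative on $\Omega$. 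A finite sum of nonnegative quantities is nonnegative, hence $f(x) \ge 0$; since $x \in \Omega$ was arbitrary, $f$ is nonnegative on $\Omega$. For the second claim, if $f'(x) \ge f(x)$ holds for $x \in \Omega$, then chaining $f'(x) \ge f(x) \ge 0$ on $\Omega$ immediately gives nonnegativity of $f'$ there.

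I do not anticipate a genuine obstacle, as the result is little more than a restatement that elements of the preordering are nonnegative on the associated semialgebraic set. The only points deserving a moment of care are the convention $g_i(x)^0 = 1$ for the zero exponents, so that the empty product is handled correctly, and the fact that the domination $f' \ge f$ must be read as a pointwise inequality valid at least on $\Omega$, which is precisely what the final inequality chain requires.
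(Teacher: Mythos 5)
Your proof is correct. Note that the paper itself states Proposition \ref{lem:positive} without any proof, treating it as a standard fact about preorderings (the illustrative example following it merely applies the proposition rather than establishing it), and your argument — each summand $s_\nu(x)\,g_1(x)^{\nu_1}\cdots g_m(x)^{\nu_m}$ is nonnegative at any $x \in \Omega$ since $s_\nu$ is a sum of squares and each exponent $\nu_i \in \{0,1\}$, followed by the chain $f'(x) \ge f(x) \ge 0$ — is exactly the standard reasoning the paper implicitly relies on, so it fills the omission with no gaps.
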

To illustrate how this proposition applies, consider a polynomial $f(x)$ on a domain defined by $x \in [-1,1]$. The bounded domain can be equivalently defined by polynomials $g_1(x) = 1+x$ and $g_2(x) = 1-x$. To certify that $f(x) \ge 0$ on the specified domain, construct a function $h(x) = s_1(x) (1+x) + s_2(x) (1-x) + s_3(x) (1+x)(1-x)$ where $s_i \in \mathbb{S}[x]$ and certify that $f(x) - h(x) \ge 0$. Notice that $h(x) \in P(1+x,1-x)$, so $h(x) \ge 0$. If $f(x) - h(x) \ge 0$, then $f(x) \ge h(x) \ge 0$. Proposition \ref{lem:positive} is applied here. Finding the correct $s_i(x)$ is not trivial in general. Nonetheless, as mentioned earlier, if we further impose that $f(x) - h(x) \in \mathbb{S}[x]$, then checking if there exists $s_i(x)$ such that $f(x) - h(x) \in \mathbb{S}[x]$ becomes a semidefinite feasibility program as given by Theorem \ref{thm:sos-test-sdp}. More concretely, the procedure may begin with a limited polynomial degree for $s_i(x)$, increasing the degree until a certificate is found (if one exists) or the computation resources are exhausted. 

To simplify notation in later text, given a domain $\Omega = \{x \mid  g_i(x) \in \mathbb{R}[x], g_i(x) \ge 0, i \in \{ 1,2,\ldots,m\}\}$, we set the notation $P(\Omega) = P(g_1,\ldots,g_m)$. 

\section{Sum-of-Squares Relaxation of the HJB PDE}\label{sec:problem}

This section demonstrates how SOS programming can be used to solve the linear HJB via an SOS relaxation. We would like to emphasize the following standing assumption, typical of moment and SOS-based methods \cite{lasserre2008nonlinear,Parrilo:2003fh}.
\begin{ass} Assume that system \eqref{eq:stochastic-dynamics} evolves on a compact domain $\Omega \subset \mathbb{R}^n$ that is also a basic closed semialgebraic set such that $\Omega = \{x \mid g_i(x) \in \mathbb{R}[x], g_i(x) \ge 0, i \in \{1, \ldots, k\}\}$ for some $k \ge 1$. Then, the boundary $\partial\Omega$ is polynomial representable. We use the notation $\partial\Omega = \{x \mid h_i(x) \in \mathbb{R}[x], \prod_{i=1}^m h_i(x)=0\}$ for some $m \ge 1$ to describe this boundary. 
\end{ass}

The following definitions formalize several operators that will be useful in later text.
\begin{defn}\label{def:domain}
	Given a basic closed semialgebraic set $\Omega = \{x \mid g_i(x) \in \mathbb{R}[x], g_i(x) \ge 0, i \in \{1, \ldots, k\}\}$ and a set of SOS polynomials,
$\mathcal{S} = \{s_\nu(x)\mid s_\nu(x) \in\mathbb{S}[x], \nu \in \{0,1\}^k\}$, 
	define the operator $\mathcal{D}$ as 
	\begin{equation*}
		\mathcal{D}(\Omega,\mathcal{S}) = \sum_{\nu \in \{0,1\}^k} s_\nu(x) g_1(x)^{\nu_1}\cdots g_k(x)^{\nu_k}
	\end{equation*}
	where $\mathcal{D}(\Omega,\mathcal{S}) \in P(\Omega)$.
\end{defn}

\begin{defn}\label{def:boundary}
	Given a polynomial inequality, $p(x) \ge 0$, the boundary of a compact set $\partial\Omega = \{x \mid h_i(x) \in \mathbb{R}[x], \prod_{i=1}^m h_i(x)=0\}$ and a set of  polynomials, $\mathcal{T} = \{t_i(x) \mid t_i(x) \in\mathbb{R}[x], i \in \{1, \ldots, m\}\}$,  
	define the operator $\mathcal{B}$ as 
	\begin{equation*}
		\mathcal{B}(p(x),\partial \Omega,\mathcal{T}) = \{p(x)- t_{i}(x) h_{i}(x) \mid  i \in \{1, \ldots, m\}\}
	\end{equation*}
	where $\mathcal{B}$ returns a set of polynomials that is nonnegative on $\partial \Omega$.
\end{defn}

\subsection{Relaxation of the HJB equation}\label{sec:hjb-relax}
For the remainder of this paper, we assume that the unique solution to \eqref{eq:hjb-pde-value} and \eqref{eq:hjb-pde} exists in the viscosity solutions sense (see \cite{Fleming:2006tl}, Chapter V) and denote the unique solutions as $V^*$ and $\Psi^*$ respectively.

The equality constraints of \eqref{eq:hjb-pde} may be relaxed (in either direction) as follows
  \begin{gather}
    \frac{1}{\lambda}q\Psi - \mathcal{L}(\Psi) \le (\ge) 0 \nonumber \\
		\Psi(x) \le (\ge) \psi(x) \qquad  x \in \partial \Omega. \label{eq:over-approximation-relax}
  \end{gather}

\noindent This relaxation provides a point-wise bound to the true solution, and it may be enforced via SOS programming. In particular, a solution to \eqref{eq:over-approximation-relax}, denoted as $\Psi_l$ ($\Psi_u$), is a lower (upper) bound on the solution $\Psi^*$ over the domain $\Omega$.

\begin{prop} \label{prop:Psi_bound}
Given a smooth function $ \Psi_l $ ($\Psi_u$) that satisfies  \eqref{eq:over-approximation-relax}, then $\Psi_l$ ($\Psi_u$) is a viscosity subsolution (supersolution) and $\Psi_l \le \Psi^{*}$ ($\Psi_u \ge \Psi^{*}$) for all $x \in \Omega$.
\end{prop}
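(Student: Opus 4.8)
The plan is to recognize the relaxed inequalities \eqref{eq:over-approximation-relax} as exactly the defining property of a classical, hence viscosity, sub/supersolution of the stationary PDE \eqref{eq:hjb-pde}, and then to invoke the comparison principle for degenerate elliptic equations to order $\Psi_l$ (resp. $\Psi_u$) against the unique viscosity solution $\Psi^*$. I will argue only the subsolution/lower-bound case; the supersolution/upper-bound case is identical with the inequalities reversed. First I would write \eqref{eq:hjb-pde} in the standard second-order form $F(x,r,p,X)=0$, where
\[
  F(x,r,p,X) = \tfrac{1}{\lambda} q(x)\,r - f(x)^{T} p - \tfrac{1}{2} Tr(\Sigma_t X),
\]
with $r$ playing the role of $\Psi$, $p$ of $\nabla_x \Psi$, and $X$ of $\nabla_{xx}\Psi$. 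The next step is to verify that $F$ is proper (degenerate elliptic): since $q$ is positive definite, hence nonnegative on $\Omega$, $F$ is nondecreasing in $r$; and since $\Sigma_t = B\Sigma_\epsilon B^{T} \succeq 0$, for any $X \preceq Y$ one has $Tr(\Sigma_t(Y-X)) \ge 0$, so $F(x,r,p,X) \ge F(x,r,p,Y)$, i.e. $F$ is nonincreasing in $X$. These two monotonicities are precisely the hypotheses required to speak of viscosity solutions of $F=0$ and to run the comparison machinery.

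Second, I would show that the smooth $\Psi_l$ satisfying $\tfrac{1}{\lambda}q\Psi_l - \mathcal{L}(\Psi_l) \le 0$ pointwise is a viscosity subsolution, i.e. $F(x,\Psi_l,\nabla_x\Psi_l,\nabla_{xx}\Psi_l) \le 0$ in the viscosity sense. For a smooth function this follows from the equivalence of the classical and viscosity notions: at any $x_0$ where a $C^2$ test function $\varphi$ touches $\Psi_l$ from above, the difference $\varphi - \Psi_l$ has a local minimum at $x_0$, so $\nabla_x\varphi(x_0) = \nabla_x\Psi_l(x_0)$ and $\nabla_{xx}\varphi(x_0) \succeq \nabla_{xx}\Psi_l(x_0)$. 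Degenerate ellipticity (nonincreasing in $X$) then gives $F(x_0,\Psi_l(x_0),\nabla_x\varphi(x_0),\nabla_{xx}\varphi(x_0)) \le F(x_0,\Psi_l(x_0),\nabla_x\Psi_l(x_0),\nabla_{xx}\Psi_l(x_0)) \le 0$, which is exactly the viscosity subsolution inequality.

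Third, with the boundary relaxation $\Psi_l(x) \le \psi(x) = \Psi^*(x)$ for $x \in \partial\Omega$ now available, I would apply the comparison principle for proper second-order PDEs on the bounded domain $\Omega$ (for instance the comparison result in \cite{Fleming:2006tl}): a viscosity subsolution that lies below a viscosity supersolution on $\partial\Omega$ lies below it throughout $\Omega$. Taking the supersolution to be $\Psi^*$ itself yields $\Psi_l \le \Psi^*$ on all of $\Omega$, which completes the argument; the symmetric computation with $\Psi_u$ gives $\Psi_u \ge \Psi^*$.

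The hard part will be justifying that the comparison principle genuinely applies, since its structural hypotheses are not automatic. Because the coefficients $f$ and $\Sigma_t$ are $x$-dependent, one must check the standard structure/modulus-of-continuity condition controlling $F(y,r,\alpha(x-y),Y) - F(x,r,\alpha(x-y),X)$ through the doubling-of-variables matrix inequality; on the compact $\Omega$ the polynomial, hence locally Lipschitz, coefficients make this routine, but it should be stated rather than assumed. A second delicate point is that the zeroth-order coefficient $q/\lambda$ vanishes at the origin, so $F$ is not strictly monotone in $r$ there; this is reconciled by the first-exit formulation, which treats the origin as part of the exit (Dirichlet) set with $\psi(0) = e^{-\phi(0)/\lambda} = 1$, so the origin is absorbed into the boundary condition rather than requiring interior strict monotonicity. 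I would make the precise exit set and the invoked comparison theorem explicit to close these gaps.
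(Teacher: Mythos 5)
Your proposal is correct and takes essentially the same route as the paper's own proof: recognize the smooth $\Psi_l$ ($\Psi_u$) as a viscosity subsolution (supersolution), note the boundary ordering $\Psi_l \le \psi \le \Psi_u$, and invoke the comparison/maximum principle against the viscosity solution $\Psi^*$ --- precisely the argument the paper compresses into citations of \cite[Def.~2.2]{crandall1992user} and \cite[Thm~3.3]{crandall1992user}. Your explicit verification of properness, the structure condition for $x$-dependent coefficients, and the handling of the degeneracy of $q$ at the origin (via the first-exit Dirichlet set) supplies hypotheses the paper leaves implicit, but the underlying proof is the same.
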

\begin{proof}
By \cite[Def. 2.2]{crandall1992user}, the solution $\Psi_l$ is a viscosity subsolution. Note that $\Psi^*$ is both a viscosity subsolution and a viscosity supersolution, and $\Psi_l \le \Psi^{*}$ on the boundary $\partial \Omega$. Hence, by the maximum principle for viscosity solutions \cite[Thm 3.3]{crandall1992user}, $\Psi_l \le \Psi^{*}$ for all $x \in \Omega$. Similar argument applies for $\Psi_u$.
\end{proof}


Because the logarithmic transform \eqref{eq:log-transform} is monotonic, one can relate
these bounds on the desirability function to bounds on the value function as follows:
\begin{prop} \label{prop:v-upper-lower}
If the solution to \eqref{eq:hjb-pde-value} is $V^*$, given solutions $V_u = -{\lambda}\log \Psi_l$ and $V_l = -{\lambda}\log \Psi_u$ from \eqref{eq:over-approximation-relax}, then $V_u \ge V^*$ and $V_l \le V^*$.
\end{prop}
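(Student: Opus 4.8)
The plan is to reduce the statement to a one-line monotonicity argument, since all the analytic content has already been established in Proposition \ref{prop:Psi_bound}. First I would recall the logarithmic transformation \eqref{eq:log-transform}, which relates the exact value function to the exact desirability function by $V^* = -\lambda \log \Psi^*$. By Proposition \ref{prop:Psi_bound}, the relaxed solutions sandwich the exact desirability function pointwise on $\Omega$, i.e.
\begin{equation*}
  \Psi_l(x) \le \Psi^*(x) \le \Psi_u(x), \qquad x \in \Omega.
\end{equation*}
The definitions $V_u = -\lambda \log \Psi_l$ and $V_l = -\lambda \log \Psi_u$ then make the claim equivalent to transporting this chain of inequalities through the map $\Psi \mapsto -\lambda \log \Psi$.

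The key observation I would make explicit is that, because $\lambda > 0$ and the natural logarithm is strictly increasing on the positive reals, the composite map $s \mapsto -\lambda \log s$ is strictly \emph{decreasing} on $(0,\infty)$. Applying a decreasing map to the sandwich inequality above reverses each inequality sign. Concretely, from $\Psi_l \le \Psi^*$ I obtain $-\lambda \log \Psi_l \ge -\lambda \log \Psi^*$, which is exactly $V_u \ge V^*$; symmetrically, from $\Psi^* \le \Psi_u$ I obtain $-\lambda \log \Psi_u \le -\lambda \log \Psi^*$, which is $V_l \le V^*$. Both desired bounds follow immediately and hold for every $x \in \Omega$.

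The only point requiring care — and the step I would flag as the sole potential obstacle — is ensuring that the logarithm is well-defined, i.e. that $\Psi_l$, $\Psi^*$, and $\Psi_u$ are strictly positive on $\Omega$ so the monotonicity argument applies. For the exact solution this is inherited from the boundary data $\psi(x) = e^{-\phi(x)/\lambda} > 0$ together with the structure of the desirability function, and the relaxed subsolution/supersolution are taken to be positive smooth functions of the same form (a positivity one can enforce directly in the SOS formulation). Provided this positivity is in force, the proof is complete with no further estimates; the substance of the result rests entirely on Proposition \ref{prop:Psi_bound}, and this proposition merely records that the monotone transform \eqref{eq:log-transform} converts bounds on $\Psi^*$ into reversed bounds on $V^*$.
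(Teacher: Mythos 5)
Your proof is correct and is essentially the paper's own argument: the paper states this proposition without a formal proof, justifying it only with the remark that the logarithmic transform \eqref{eq:log-transform} is monotonic, which is precisely the reverse-monotonicity of $s \mapsto -\lambda \log s$ applied to the sandwich $\Psi_l \le \Psi^* \le \Psi_u$ from Proposition \ref{prop:Psi_bound} that you spell out. Your extra care about strict positivity of the desirability functions (so the logarithm is well-defined) is a point the paper leaves implicit, but it does not change the route.
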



\subsection{Controller Synthesis}\label{sec:oc_synthesis}

Given that relaxation \eqref{eq:over-approximation-relax} results in a point-wise upper and lower bound to the exact solution of \eqref{eq:hjb-pde}, we construct the following optimization that provides a suboptimal controller with bounded residual error:
  \begin{alignat}{2}
	\min_{\Psi_l,\Psi_u} \quad& \epsilon \label{eq:hjbjoin} \\
	s.t. \quad &  \frac{1}{\lambda} q \Psi_l 
              - \mathcal{L}(\Psi_l) \leq 0  && \quad x \in\Omega\nonumber \\
 	& 0 \leq  \frac{1}{\lambda} q \Psi_u 
              - \mathcal{L}(\Psi_u) &&  \quad x \in \Omega \nonumber \\
	& \Psi_u - \Psi_l \leq \epsilon &&\quad x\in \Omega \nonumber \\
	& 0 \le \Psi_l  \le\psi  \le \Psi_u   && \quad x \in\partial \Omega \nonumber \\
	& \partial_{x^i} \Psi_l \le 0 && \quad x^i \ge 0 \nonumber \\
	& \partial_{x^i} \Psi_l \ge 0 && \quad x^i \le 0 \nonumber \\
	& \Psi_l(0) = 1  \nonumber 
  \end{alignat} 
  where $x^i$ is the $i$-th component of $x \in \Omega$. As mentioned in Section \ref{sec:hjb-relax}, the first two constraints result from the relaxations of the HJB equation, and the fourth constraint
arises from the relaxation of the boundary conditions. The third constraint ensures that the
solution error is bounded by $\epsilon$, and the last three constraints ensure that the solution yields a stabilizing controller, as will be made clear in Section \ref{sec:analysis}.

In order to solve \eqref{eq:hjbjoin} as a semidefinite optimization problem, we restrict the polynomial inequalities such that they are SOS polynomials instead of nonnegative polynomials. Therefore, after applying Proposition \ref{lem:positive} to the domain constraints, the resulting optimization is 
  \begin{alignat}{2}
	\min_{\Psi_l,\Psi_u,\mathcal{S},\mathcal{T}} \quad& \epsilon \label{eq:hjbjoin-sos} \\
	s.t. \quad &  \frac{1}{\lambda} q \Psi_l 
              + \mathcal{L}(\Psi_l)- \mathcal{D}(\Omega,\mathcal{S}_1) \in \mathbb{S}[x]\nonumber \\
 	&  \frac{1}{\lambda} q \Psi_u 
              - \mathcal{L}(\Psi_u) - \mathcal{D}(\Omega,\mathcal{S}_2) \in \mathbb{S}[x] \nonumber \\
	& \epsilon - (\Psi_u - \Psi_l ) - \mathcal{D}(\Omega,\mathcal{S}_3)  \in \mathbb{S}[x]\nonumber \\
	&  \mathcal{B}( \Psi_l ,\partial\Omega,\mathcal{T}_1) \in \mathbb{S}[x]\nonumber \\
	&  \mathcal{B}(\psi -\Psi_l ,\partial\Omega,\mathcal{T}_2)\in \mathbb{S}[x] \nonumber \\
	&  \mathcal{B}( \Psi_u - \psi,\partial\Omega,\mathcal{T}_3) \in \mathbb{S}[x]\nonumber \\
	& -\partial_{x^i} \Psi_l - \mathcal{D}(\Omega \cap \{x^i \ge 0\}, \mathcal{S}_4) \in \mathbb{S}[x]\nonumber \\
	& \partial_{x^i} \Psi_l  - \mathcal{D}(\Omega \cap \{-x^i \ge 0\}, \mathcal{S}_5) \in \mathbb{S}[x]\nonumber \\
	& \Psi_l(0) = 1  \nonumber 
  \end{alignat}
  where  $\mathcal{S} = (\mathcal{S}_1,\ldots,\mathcal{S}_4,\mathcal{S}_5)$, $\mathcal{S}_i \subseteq \mathbb{S}[x]$ is defined as in Definition \ref{def:domain}, $\mathcal{T} = (\mathcal{T}_1,\mathcal{T}_2,\mathcal{T}_3)$, and $\mathcal{T}_j \subseteq \mathbb{R}[x]$ is defined as in Definition \ref{def:boundary}. With a slight abuse of notation, $\mathcal{B}(\cdot) \in \mathbb{S}[x]$ implies that each polynomial in $\mathcal{B}(\cdot)$ is a SOS polynomial.

If the degrees of polynomials are fixed, optimization \eqref{eq:hjbjoin-sos} is convex and may be solved as an SDP via Theorem \ref{thm:sos-test-sdp}. The next section will discuss the systematic approach we used to solve the optimization. 
  
\begin{rem}
By definition, the viscosity solution is a continuous function \cite[Def. 2.2]{crandall1992user}. Consequently, the solution $\Psi^*$ is a continuous function defined on a bounded domain. Hence, $\Psi_u$ and $\Psi_l$ can be made arbitrary close to $\Psi^*$ by the Stone-Weierstrass Theorem \cite{rudin1964principles} in \eqref{eq:hjbjoin}. However, this guarantee is lost when $\Psi_u$ and $\Psi_l$ are restricted to be SOS polynomials. The feasible set of the optimization problem \eqref{eq:hjbjoin-sos} is therefore not necessarily non-empty for a given polynomial degree. 
\end{rem}

\subsection{Hierarchy of SOS programs}

%

Let $d$ be the maximum degree of $\Psi_l$, $\Psi_u$ and polynomials in $\mathcal{S}$ and $\mathcal{T}$, and denote $(\Psi^d_u,\Psi^d_l, \mathcal{S}^d,\mathcal{T}^d, \epsilon^d)$ as a solution to \eqref{eq:hjbjoin-sos} when the maximum polynomial degree is fixed at $d$. The hierarchy of SOS programs with increasing polynomial degree produces a sequence of possibly empty solutions $(\Psi^d_u,\Psi^d_l, \mathcal{S}^d,\mathcal{T}^d, \epsilon^d)_{d\in I}$, where $I \subset \mathbb{Z}_+$. This sequence will be shown in the next section to improve, under the metric of the objective in \eqref{eq:hjbjoin-sos}. 
The use of such hierarchies has become common in polynomial optimization \cite{lasserre2001global, Parrilo:2003fh}. Once a satisfactory error is achieved or computational resources run out, the lower bound $\Psi_l$ is used to compute the suboptimal controller.  The suboptimal controller $u^\epsilon$ for a given error $\epsilon$ is computed as $u^\epsilon = -R^{-1}G^T \nabla_x V_u$ where $V_u = - {\lambda} \log {\Psi_l}$. The next section will analyze the properties of the solutions and the suboptimal controller.

\section{ANALYSIS}\label{sec:analysis}

This section establishes appealing properties of the solutions to the optimization \eqref{eq:hjbjoin-sos}
that are relevant for feedback control. First, we show that the solutions in the SOS program hierarchy are uniformly bounded relative to the exact solutions. We next prove that the solutions to the relaxed stochastic HJB equation are SCLFs, and they yield stabilizing controllers. Finally, we show that the costs of using the approximate solutions as controllers are bounded above by the approximated value functions. 

\subsection{Properties of the Approximated Desirability Functions}

%
First, compute the approximation error of the true desirability function $\Psi_l$ or $\Psi_u$ obtained from optimization \eqref{eq:hjbjoin-sos}.
\begin{prop} \label{prop:psi_estimate}
Given a solution $(\Psi_u,\Psi_l, \mathcal{S},\mathcal{T},\epsilon)$ to \eqref{eq:hjbjoin-sos} for a fixed degree $d$, the approximation
error of a desirability function is bounded as $||\Psi - \Psi^*||_{\infty} \leq \epsilon$ where
$\Psi$ is either $\Psi_u$ or $\Psi_l$.
\end{prop}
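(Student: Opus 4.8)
The plan is to establish the pointwise sandwich $\Psi_l \le \Psi^* \le \Psi_u$ on $\Omega$, and then squeeze the error using the width bound $\Psi_u - \Psi_l \le \epsilon$ supplied by the third constraint of \eqref{eq:hjbjoin-sos}. Since $\|\Psi - \Psi^*\|_\infty$ is by definition the supremum of $|\Psi - \Psi^*|$ over $\Omega$, these two ingredients combine immediately to give the claim, so the work is really in verifying that the feasibility conditions of \eqref{eq:hjbjoin-sos} deliver exactly these two facts.

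First I would translate the SOS constraints of \eqref{eq:hjbjoin-sos} back into the pointwise inequalities of the relaxation \eqref{eq:over-approximation-relax}. The first two constraints, each of the form (quantity) $-\,\mathcal{D}(\Omega,\mathcal{S}_i) \in \mathbb{S}[x]$, match the hypotheses of Proposition \ref{lem:positive}: since $\mathcal{D}(\Omega,\mathcal{S}_i) \in P(\Omega)$ and the difference is SOS (hence nonnegative), the quantities $\frac{1}{\lambda} q \Psi_l - \mathcal{L}(\Psi_l)$ and $\frac{1}{\lambda} q \Psi_u - \mathcal{L}(\Psi_u)$ inherit the correct signs on $\Omega$. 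Likewise, the three boundary constraints, read through Definition \ref{def:boundary}, certify $0 \le \Psi_l \le \psi \le \Psi_u$ on $\partial\Omega$. Thus $\Psi_l$ and $\Psi_u$ satisfy \eqref{eq:over-approximation-relax} in the lower and upper directions respectively, and Proposition \ref{prop:Psi_bound} yields $\Psi_l \le \Psi^* \le \Psi_u$ for all $x \in \Omega$.

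Next, the third constraint $\epsilon - (\Psi_u - \Psi_l) - \mathcal{D}(\Omega,\mathcal{S}_3) \in \mathbb{S}[x]$, again via Proposition \ref{lem:positive}, gives $\Psi_u - \Psi_l \le \epsilon$ everywhere on $\Omega$. Combining this with the sandwich: for $\Psi = \Psi_u$ we have $0 \le \Psi_u - \Psi^* \le \Psi_u - \Psi_l \le \epsilon$, and for $\Psi = \Psi_l$ we have $0 \le \Psi^* - \Psi_l \le \Psi_u - \Psi_l \le \epsilon$. In either case $|\Psi - \Psi^*| \le \epsilon$ pointwise on $\Omega$, and taking the supremum yields $\|\Psi - \Psi^*\|_\infty \le \epsilon$.

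I expect no serious obstacle here: the argument is simply the combination of the ordering from Proposition \ref{prop:Psi_bound} with the $\epsilon$-width bound, plus routine bookkeeping. The only points requiring care are confirming that each SOS constraint genuinely reproduces the corresponding pointwise inequality on the correct set (interior $\Omega$ versus boundary $\partial\Omega$), and that the hypotheses of Proposition \ref{prop:Psi_bound} apply simultaneously to both $\Psi_l$ and $\Psi_u$ — which relies on the candidates being smooth polynomials, so that they qualify as viscosity sub- and supersolutions.
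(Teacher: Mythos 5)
Your proposal is correct and follows essentially the same argument as the paper: sandwich $\Psi_l \le \Psi^* \le \Psi_u$ via Proposition \ref{prop:Psi_bound}, combine with the width bound $\Psi_u - \Psi_l \le \epsilon$, and conclude pointwise. The only difference is that you explicitly verify the translation from the SOS constraints of \eqref{eq:hjbjoin-sos} to the pointwise inequalities of \eqref{eq:over-approximation-relax} via Proposition \ref{lem:positive}, a step the paper leaves implicit; this is a welcome (if minor) addition in rigor, not a different route.
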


\begin{proof}
By Corollary \ref{prop:Psi_bound}, $\Psi_l$ is the lower bound of $\Psi^*$,
and $\Psi_u$ is the upper bound of $\Psi^*$. So, $\epsilon \geq \Psi_u - \Psi_l \geq 0$ and $\Psi_u
\geq \Psi^* \geq \Psi_l$. Combining both inequalities, one has $\Psi_u - \Psi^* \leq \epsilon$
and $\Psi^*- \Psi_l \leq \epsilon$. Therefore, $||\Psi - \Psi^*||_{\infty} \leq \epsilon$ where
$\Psi$ is either $\Psi_u$ or $\Psi_l$.
\end{proof}

\begin{prop} \label{prop:gap}
The hierarchy of SOS programs consisting of solutions to \eqref{eq:hjbjoin-sos} with
increasing polynomial degree produces a sequence of solutions $(\Psi^d_u,\Psi^d_l, \mathcal{S}^d,\mathcal{T}^d, \epsilon^d)$ such that $\epsilon^{d+1} \le \epsilon^{d}$ for all $d$.
\end{prop}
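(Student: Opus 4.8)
The plan is a standard nesting argument for SOS hierarchies. The key observation is that the degree-$d$ instance of \eqref{eq:hjbjoin-sos} and the degree-$(d+1)$ instance share exactly the same constraint structure; the parameter $d$ enters only as an upper bound on the degrees of the decision polynomials $\Psi_l$, $\Psi_u$ and of the multiplier polynomials collected in $\mathcal{S}$ and $\mathcal{T}$. Since the space of polynomials of degree at most $d$ sits inside the space of polynomials of degree at most $d+1$, enlarging the degree bound can only relax the problem, and a relaxed minimization cannot have a larger optimal value.

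First I would make the feasible-set inclusion precise. Let $\mathcal{F}_d$ denote the feasible set of \eqref{eq:hjbjoin-sos} when the maximum polynomial degree is fixed at $d$. I claim $\mathcal{F}_d \subseteq \mathcal{F}_{d+1}$ under the natural inclusion of polynomial spaces. Take any feasible tuple $(\Psi_u,\Psi_l,\mathcal{S},\mathcal{T},\epsilon) \in \mathcal{F}_d$. Every polynomial in this tuple has degree at most $d \le d+1$, so it is an admissible decision variable for the degree-$(d+1)$ program. Each constraint of \eqref{eq:hjbjoin-sos} is either the point evaluation $\Psi_l(0)=1$ or a membership-in-$\mathbb{S}[x]$ condition assembled from $\Psi_l$, $\Psi_u$, the operator $\mathcal{D}(\Omega,\cdot)$, and the operator $\mathcal{B}(\cdot)$; none of these conditions reference the degree bound beyond requiring the polynomials to lie in the search space. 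Hence the identical tuple satisfies every constraint of the degree-$(d+1)$ program with the same value of $\epsilon$, which establishes the inclusion.

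The conclusion then follows immediately. Both programs minimize the same objective $\epsilon$ over their feasible sets, and $\mathcal{F}_d \subseteq \mathcal{F}_{d+1}$, so the infimum over the larger set cannot exceed the infimum over the smaller set. Concretely, the optimizer $(\Psi_u^d,\Psi_l^d,\mathcal{S}^d,\mathcal{T}^d,\epsilon^d)$ attaining $\epsilon^d$ is itself a feasible point of the degree-$(d+1)$ program, whence $\epsilon^{d+1} \le \epsilon^d$.

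The main subtlety is not the algebra but the bookkeeping around feasibility. As the preceding remark notes, $\mathcal{F}_d$ may be empty for small $d$, since restricting $\Psi_l,\Psi_u$ to polynomials (rather than arbitrary continuous functions) can render \eqref{eq:hjbjoin-sos} infeasible. The inclusion $\mathcal{F}_d \subseteq \mathcal{F}_{d+1}$ resolves this cleanly: once some degree $d$ is feasible, every higher degree is feasible as well, so the index set $I \subset \mathbb{Z}_+$ on which solutions exist is an up-set, and $(\epsilon^d)_{d \in I}$ is non-increasing there. I would therefore phrase the statement as holding for all $d \in I$, and explicitly verify that $d$ is defined as the \emph{maximum} degree over \emph{all} polynomial variables, so that incrementing $d$ relaxes every decision variable simultaneously and no constraint degree is inadvertently tightened.
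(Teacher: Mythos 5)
Your proposal is correct and follows essentially the same argument as the paper: a feasible tuple at degree $d$ remains feasible at degree $d+1$ (the paper phrases this as padding with zero coefficients for the degree-$(d+1)$ monomials), so the minimum objective cannot increase. Your additional bookkeeping about the index set $I$ being an up-set once feasibility is attained is a nice refinement that the paper leaves implicit, but it does not change the core of the argument.
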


\begin{proof}
	Polynomials of degree $d$ form a subset of polynomials of degree $d+1$. Thus, at a higher polynomial degree $d+1$, a previous solution at a lower polynomial degree $d$ is still a feasible solution when the coefficients for monomials with total degree $d+1$ is set to 0. Consequently, the optimal value $\epsilon^{d+1}$ cannot be smaller than $\epsilon^{d}$ for all $d$. 
\end{proof}

Although the \emph{bound} on the pointwise error is non-increasing, the actual error may in fact increase between iterations. We bound this variation as follows.
\begin{col} \label{col:gap}
Suppose $||\Psi^{d} - \Psi^*||_{\infty} \le \epsilon^{d}$ and $||\Psi^{d+1} - \Psi^*||_{\infty} = \gamma^{d+1}$. Then, $\gamma^{d+1}\le \epsilon^{d}$. 
\end{col}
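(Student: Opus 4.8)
The plan is to chain together the two immediately preceding results, which already supply everything that is needed. The quantity $\gamma^{d+1}$ is by definition the actual sup-norm error of the degree-$(d+1)$ solution, so the first step is simply to bound it by the corresponding error certificate. Applying Proposition~\ref{prop:psi_estimate} to the solution at degree $d+1$ gives $\gamma^{d+1} = \|\Psi^{d+1} - \Psi^*\|_\infty \le \epsilon^{d+1}$, where $\epsilon^{d+1}$ is the objective value of \eqref{eq:hjbjoin-sos} at degree $d+1$ and $\Psi^{d+1}$ stands for either $\Psi_u^{d+1}$ or $\Psi_l^{d+1}$.

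The second step invokes the monotonicity of the bound. Proposition~\ref{prop:gap} establishes that the optimal objective values are non-increasing along the hierarchy, i.e.\ $\epsilon^{d+1} \le \epsilon^{d}$. Combining the two inequalities in the obvious order yields $\gamma^{d+1} \le \epsilon^{d+1} \le \epsilon^{d}$, which is exactly the claim.

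There is no real obstacle here; the statement is a one-line consequence of Propositions~\ref{prop:psi_estimate} and~\ref{prop:gap}. The only subtlety worth flagging is notational: the corollary writes $\Psi^{d}$ generically, so I would make explicit that the same choice (either $\Psi_u$ or $\Psi_l$) is used throughout, and that $\gamma^{d+1}$ is the \emph{realized} error while $\epsilon^{d}$ is the \emph{certified} bound at the coarser degree. The conceptual content of the corollary is precisely this distinction: even though the actual error $\gamma^{d+1}$ at the finer level need not improve on $\gamma^{d}$, it can never exceed the previous certificate $\epsilon^{d}$, so refining the hierarchy never degrades accuracy beyond the guarantee already achieved.
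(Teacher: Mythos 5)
Your proof is correct and follows essentially the same route as the paper, which chains $\gamma^{d+1}\le \epsilon^{d+1}\le \epsilon^{d}$ using the error bound and the monotonicity of the hierarchy. In fact your write-up is slightly more careful than the paper's one-line proof, since you explicitly attribute the first inequality to Proposition~\ref{prop:psi_estimate} rather than folding it into Proposition~\ref{prop:gap}.
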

\begin{proof}
From Proposition \ref{prop:gap}, $\gamma^{d+1}\le \epsilon^{d+1} \le \epsilon^{d}$. 
\end{proof}

Note that $\epsilon$ is only non-increasing as
polynomial degree increases. Therefore, Proposition \ref{prop:gap} and Corollary \ref{col:gap} does not guarantee a convergence of $\epsilon$ to zero. 

\subsection{Properties of the Approximated Value Function}\label{sec:cost_convergence}

We now investigate the implications of Corollary \ref{col:gap} upon the value function. 
Henceforth, denote the solution to \eqref{eq:hjb-pde-value} as $V^*(x_t) = \min_{u[t:T]}\mathbb{E}_{\omega_t}[J(x_t)] = - \lambda \log \Psi^*(x_t)$, and the suboptimal value function computed from the solution of \eqref{eq:hjbjoin-sos} as $V_u = - {\lambda} \log {\Psi_l}$. 

\begin{thm} \label{thm:cost-upper}
$V_u$ is an upper bound of the optimal cost $V^*$ such that
  \begin{equation}
	 0 \leq V_u - V^* \leq -\lambda \log\left(1-\min\left\{1,\frac{\epsilon}{\eta}\right\}\right) 
  \end{equation}
where $\eta = e^{-\frac{\norm{V^*}_\infty}{\lambda}}$.
\end{thm}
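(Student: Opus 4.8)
The plan is to work directly from the definitions $V_u = -\lambda\log\Psi_l$ and $V^* = -\lambda\log\Psi^*$ and reduce both inequalities to monotonicity of the logarithm, fed by the two facts already established: $\Psi_l \le \Psi^*$ (Proposition~\ref{prop:Psi_bound}) and $\Psi^* - \Psi_l \le \Psi_u - \Psi_l \le \epsilon$ (from the third constraint of \eqref{eq:hjbjoin} together with Proposition~\ref{prop:psi_estimate}).

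The lower bound is immediate: since $\Psi_l \le \Psi^*$ and $t\mapsto -\lambda\log t$ is strictly decreasing for $\lambda>0$, we get $V_u = -\lambda\log\Psi_l \ge -\lambda\log\Psi^* = V^*$, i.e. $V_u - V^* \ge 0$. For the upper bound I would first rewrite the gap as a single logarithm of a relative error,
\[
V_u - V^* = -\lambda\log\frac{\Psi_l}{\Psi^*} = -\lambda\log\!\left(1 - \frac{\Psi^*-\Psi_l}{\Psi^*}\right),
\]
so that everything reduces to bounding $\delta := (\Psi^*-\Psi_l)/\Psi^*$. The numerator obeys $0\le \Psi^*-\Psi_l\le\epsilon$ by the facts above; for the denominator I would use that $V^*:\Omega\to\mathbb{R}_+$ is nonnegative, hence $\Psi^* = e^{-V^*/\lambda}\ge e^{-\norm{V^*}_\infty/\lambda} = \eta$. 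Combining gives $\delta \le \epsilon/\eta$, while $\Psi_l\ge 0$ forces $\delta \le 1$, so $\delta \le \min\{1,\epsilon/\eta\}$. Since $\delta\mapsto -\lambda\log(1-\delta)$ is nondecreasing on $[0,1)$, substituting this bound yields exactly
\[
V_u - V^* \le -\lambda\log\!\left(1-\min\left\{1,\tfrac{\epsilon}{\eta}\right\}\right).
\]

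Each step is individually routine, so the delicate points are really about well-definedness and the degenerate regime. First, the positive lower bound $\Psi^*\ge\eta$ is what forces the constant $\eta = e^{-\norm{V^*}_\infty/\lambda}$ to appear and requires $\norm{V^*}_\infty<\infty$, which is guaranteed by continuity of $V^*$ on the compact domain $\Omega$. Second, the truncation by $1$ deserves a sentence: when $\epsilon/\eta \ge 1$ the relative error can only be controlled by the trivial bound $\delta\le 1$, the argument of the logarithm degenerates to $0$, and the stated bound reads $+\infty$, i.e. it is vacuous but still valid. I expect the main obstacle to be the implicit requirement that $\Psi_l>0$ on the interior, so that $V_u$ and the ratio $\Psi_l/\Psi^*$ are defined at all; I would argue this positivity is inherited from the normalization $\Psi_l(0)=1$ together with the sign constraints $\partial_{x^i}\Psi_l\le 0$ for $x^i\ge0$ and $\partial_{x^i}\Psi_l\ge0$ for $x^i\le0$ imposed in \eqref{eq:hjbjoin}, which keep $\Psi_l$ from crossing zero in the interior.
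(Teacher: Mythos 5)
Your proposal is correct and follows essentially the same route as the paper's proof: both reduce the claim to the identity $V_u - V^* = -\lambda\log(\Psi_l/\Psi^*)$ and then combine $\Psi^* - \Psi_l \le \epsilon$ (Proposition \ref{prop:psi_estimate}) with the lower bound $\Psi^* \ge \eta$ and monotonicity of the logarithm. The only differences are cosmetic refinements on your part: you make explicit the truncation by $1$ (hence the $\min\{1,\epsilon/\eta\}$ that appears in the theorem statement but is silently dropped in the paper's chain of inequalities, which becomes vacuous when $\epsilon \ge \eta$) and the well-definedness requirement $\Psi_l > 0$, both of which the paper glosses over.
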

\begin{proof}
By Proposition \ref{prop:v-upper-lower}, $V_u \ge V^* $ and hence, $V_u - V^* \ge 0$. To prove the other inequality, by Proposition \ref{prop:psi_estimate},
  \begin{align*}
	V_u - V^*  &= - \lambda \log \frac{\Psi_l}{\Psi^*} \leq -\lambda \log \frac{\Psi^*-\epsilon}{\Psi^*} \leq -\lambda \log \left(1-\frac{\epsilon}{\eta}\right) .
  \end{align*}	
The last inequality holds because $\Psi^* \geq e^{-\frac{\norm{V^*}_\infty}{\lambda}}$ by definition
in \eqref{eq:log-transform}. Since $\Psi_l$ is the lower bound of $\Psi^*$, the right hand side of the
first equality is always a positive number. Therefore, $V_u$ is a point-wise upper bound of
$V^*$.
\end{proof}

\begin{col} \label{col:cost-upper}
Let $V_u^{d} = -\lambda \log \Psi^{d}_l$ and $V_u^{d+1} = -\lambda \log \Psi^{d+1}_l$. If $V_u^d - V^*\le \epsilon^d$ and $V_u^{d+1} - V^* = \gamma^{d+1}$, then $\gamma^{d+1} \le -\lambda \log\left(1-\min\left\{1,\frac{\epsilon^{d}}{\eta}\right\}\right)$.
\end{col}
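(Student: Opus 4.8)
The plan is to mirror the structure of Corollary~\ref{col:gap}, but route the argument through the value-function bound of Theorem~\ref{thm:cost-upper} rather than the raw desirability gap of Proposition~\ref{prop:gap}. The key observation is that Theorem~\ref{thm:cost-upper} applies at \emph{every} fixed degree $d$, since its proof only uses Proposition~\ref{prop:v-upper-lower} and Proposition~\ref{prop:psi_estimate}, both of which hold for any feasible solution of \eqref{eq:hjbjoin-sos}.

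First I would instantiate Theorem~\ref{thm:cost-upper} at degree $d+1$. Writing $\Psi^{d+1}_l$ for the lower-bound desirability function and $V_u^{d+1} = -\lambda\log\Psi^{d+1}_l$, Proposition~\ref{prop:psi_estimate} gives $\|\Psi^{d+1}_l - \Psi^*\|_\infty \le \epsilon^{d+1}$, and the theorem yields
\begin{equation*}
\gamma^{d+1} = V_u^{d+1} - V^* \le -\lambda\log\!\left(1-\min\!\left\{1,\tfrac{\epsilon^{d+1}}{\eta}\right\}\right).
\end{equation*}
Next I would invoke Proposition~\ref{prop:gap}, which guarantees $\epsilon^{d+1}\le\epsilon^{d}$, to replace $\epsilon^{d+1}$ by $\epsilon^{d}$ on the right-hand side.

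The one genuinely substantive step is justifying that this replacement only enlarges the bound, i.e.\ that the map $\epsilon\mapsto -\lambda\log\bigl(1-\min\{1,\epsilon/\eta\}\bigr)$ is nondecreasing on $[0,\infty)$. Since $\lambda>0$ and $\eta>0$, for $\epsilon\in[0,\eta)$ the argument $1-\epsilon/\eta$ of the logarithm decreases as $\epsilon$ grows, so $-\lambda\log(1-\epsilon/\eta)$ increases; for $\epsilon\ge\eta$ the clamp forces the value to $+\infty$. Hence the bound is monotone in $\epsilon$, and $\epsilon^{d+1}\le\epsilon^{d}$ gives
\begin{equation*}
\gamma^{d+1}\le -\lambda\log\!\left(1-\min\!\left\{1,\tfrac{\epsilon^{d+1}}{\eta}\right\}\right)\le -\lambda\log\!\left(1-\min\!\left\{1,\tfrac{\epsilon^{d}}{\eta}\right\}\right),
\end{equation*}
which is the claim.

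I expect no real obstacle here; the monotonicity check is the only place requiring care, and it hinges on nothing beyond $\lambda,\eta>0$ (both of which hold by the assumptions on $\lambda$ and the definition $\eta=e^{-\|V^*\|_\infty/\lambda}$). The result is essentially the value-function transcription of Corollary~\ref{col:gap}, expressing that although the \emph{actual} value-function error may increase when the degree is raised from $d$ to $d+1$, it can never exceed the Theorem~\ref{thm:cost-upper} bound evaluated at the previous, larger certificate $\epsilon^{d}$.
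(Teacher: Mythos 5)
Your proposal is correct and is exactly the argument the paper intends: the corollary is stated without proof as a direct consequence of Theorem~\ref{thm:cost-upper} applied at degree $d+1$ together with Proposition~\ref{prop:gap}, which is precisely your route. Your explicit verification that $\epsilon \mapsto -\lambda\log\left(1-\min\left\{1,\epsilon/\eta\right\}\right)$ is nondecreasing is the one step the paper leaves implicit, and you have filled it in correctly.
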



At this point, we have shown that the lower bound of the desirability function gives an upper
bound of the suboptimal cost. More importantly, the upper bound of the suboptimal cost is non-increasing as the polynomial degree increases. 

\subsection{The Exact and Approximate HJB solutions are SCLFs}

%
%
%
Here, we show that the approximate value function derived from the lower desirability approximation,
$\Psi_l$, is a SCLF.

\begin{thm}\label{thm:lower-sclf}
$V_u$ is a stochastic control Lyapunov function according to Definition \ref{def:stochastic_clf}. 
\end{thm}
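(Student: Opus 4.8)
The plan is to verify directly that the candidate $\mathcal{V} = V_u = -\lambda\log\Psi_l$, paired with the feedback $u^\epsilon = -R^{-1}G^T\nabla_x V_u$, satisfies each clause of Definition \ref{def:stochastic_clf}. Since $\Psi_l$ is a polynomial and (for the logarithm to be defined) $\Psi_l>0$ on $\Omega$, the candidate $V_u$ is smooth in $x$; it carries no explicit time dependence, so it is trivially smooth in $t$ and hence lies in $\mathcal{C}^{2,1}$ with $\partial_t V_u = 0$.

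First I would dispatch the positive-definiteness clauses. The normalization constraint $\Psi_l(0)=1$ in \eqref{eq:hjbjoin} gives $V_u(0) = -\lambda\log 1 = 0$. The monotonicity constraints $\partial_{x^i}\Psi_l\le 0$ for $x^i\ge 0$ and $\partial_{x^i}\Psi_l\ge 0$ for $x^i\le 0$ force $\Psi_l$ to attain its maximum at the origin along each coordinate path, so $\Psi_l(x)\le 1$ and therefore $V_u(x)\ge 0$ on $\Omega$. Once positive definiteness is in hand, the standard fact that a continuous positive-definite function on a compact set is bounded below by a class-$\mathcal{K}$ function supplies the required $\mu\in\mathcal{K}$ with $V_u(x)\ge\mu(|x|)$.

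The core step is the Lyapunov decrease $L(V_u)\le 0$ under $u^\epsilon$. I would substitute $u^\epsilon$ into \eqref{eq:LV}, turning the control term into $-\nabla_x V_u^T G R^{-1}G^T\nabla_x V_u$, and then convert every derivative of $V_u$ into one of $\Psi_l$ via the chain rule $\nabla_x V_u = -\lambda\Psi_l^{-1}\nabla_x\Psi_l$ and $\nabla_{xx}V_u = -\lambda\Psi_l^{-1}\nabla_{xx}\Psi_l + \lambda\Psi_l^{-2}\nabla_x\Psi_l\nabla_x\Psi_l^T$. The crucial move is to invoke the noise condition \eqref{eq:noise-assumption}, which reads $G R^{-1}G^T = \lambda^{-1}\Sigma_t$; this rescales the control quadratic so that it combines with the Hessian quadratic arising from $\nabla_{xx}V_u$. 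After collecting terms the expression should collapse to
\begin{equation*}
  L(V_u) = -\frac{\lambda}{\Psi_l}\,\mathcal{L}(\Psi_l) \;-\; \frac{\lambda}{2\Psi_l^2}\,(\nabla_x\Psi_l)^T\Sigma_t\,\nabla_x\Psi_l .
\end{equation*}
The lower-bound relaxation \eqref{eq:over-approximation-relax} satisfied by $\Psi_l$ gives $\mathcal{L}(\Psi_l)\ge\lambda^{-1}q\Psi_l$, and since $\Psi_l>0$ the first term is $\le -q\le 0$; the second term is nonpositive because $\Sigma_t = B\Sigma_\epsilon B^T\succeq 0$. Hence $L(V_u)\le -q\le 0$ for the controller $u^\epsilon$, which is exactly the existence-of-$u$ clause of Definition \ref{def:stochastic_clf}.

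I expect the main obstacle to be the algebraic bookkeeping of the chain rule together with the sign accounting that makes \eqref{eq:noise-assumption} leave a single surviving nonpositive quadratic rather than an indefinite one; this is precisely where the logarithmic transform and the specific noise structure conspire to convert the relaxed HJB inequality into a Lyapunov inequality. A secondary subtlety is the positive-definiteness clause: the monotonicity constraints deliver only $V_u\ge 0$, so some care is needed to argue strict positivity away from the origin (equivalently $\Psi_l<1$ for $x\neq 0$) before the class-$\mathcal{K}$ lower bound can be asserted.
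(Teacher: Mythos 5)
Your decrease argument is exactly the paper's: the same chain-rule identities for $\nabla_x V_u$ and $\nabla_{xx} V_u$, the same use of \eqref{eq:noise-assumption} to rewrite $GR^{-1}G^T$ as $\lambda^{-1}\Sigma_t$, the same collapse to
\begin{equation*}
L(V_u) \;=\; -\frac{\lambda}{\Psi_l}\,\mathcal{L}(\Psi_l)\;-\;\frac{\lambda}{2\Psi_l^2}\,(\nabla_x\Psi_l)^T\Sigma_t\,\nabla_x\Psi_l ,
\end{equation*}
(which is the paper's expression with $\mathcal{L}(\Psi_l)=f^T\nabla_x\Psi_l+\tfrac{1}{2}Tr((\nabla_{xx}\Psi_l)\Sigma_t)$ written out), and the same final step of using the lower-bound relaxation $\mathcal{L}(\Psi_l)\ge\frac{1}{\lambda}q\Psi_l$ to conclude $L(V_u)\le -q\le 0$. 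That part is correct and complete.

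The genuine gap is the one you flag at the end but do not close: strict positive definiteness of $V_u$. The monotonicity constraints give only $\Psi_l(x)\le\Psi_l(0)=1$, i.e.\ $V_u\ge 0$; they are perfectly consistent with $\Psi_l\equiv 1$ (hence $V_u\equiv 0$) on an entire neighborhood of the origin, in which case no $\mu\in\mathcal{K}$ with $V_u(x)\ge\mu(|x|)$ can exist. Your appeal to the ``standard fact'' about continuous positive-definite functions therefore never gets off the ground, because positive definiteness is precisely what is missing. The paper closes this hole with an ingredient your proposal never invokes: Proposition \ref{prop:v-upper-lower}. Since $q$ and $\phi$ are positive definite, every admissible trajectory starting from $x\ne 0$ accrues strictly positive expected cost, so $V^*$ is positive definite; and since $\Psi_l\le\Psi^*$ (Proposition \ref{prop:Psi_bound}), monotonicity of the logarithm gives $V_u=-\lambda\log\Psi_l\ge-\lambda\log\Psi^*=V^*>0$ for $x\ne 0$. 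With that one substitution---strict positivity obtained from $V_u\ge V^*$, and the monotonicity constraints used only for the upper bound $V_u\le\mu'(|x|)$ (which is needed later for asymptotic stability, not for Definition \ref{def:stochastic_clf} itself)---your argument coincides with the paper's proof.
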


\begin{proof}
The constraint $\Psi_l(0) = 1$ ensures that $V_u(0) = - \lambda \log {\Psi_l(0)} = 0$. Notice that all terms in $J(x,u)$ from \eqref{eq:cost-functional} are positive definite, resulting in $V^*$ being a positive definite function. In addition, by Proposition \ref{prop:v-upper-lower}, $V^u \ge V^*$. Hence, $V^u$ is also a positive definite function. The second and third to last constraints in \eqref{eq:hjbjoin-sos} ensures that $\Psi_l$ is nonincreasing. Hence, $V_u$ is nondecreasing satisfying $\mu(|x|) \le V_u(x) \le \mu'(|x|)$ for some $\mu, \mu' \in \mathcal{K}$.
	
	 Next, show that there exists a $u$ such that $L(V_u) \leq 0$. Following (\ref{eq:optimal-u}), let 
	  \begin{equation} \label{eq:suboptimal}
	      u^\epsilon = -R^{-1}G^T\nabla_x V_u\ .
	  \end{equation}
	  Notice that from the definition of $V_u$, $\nabla_x V_u = - \frac{\lambda}{\Psi_l} \nabla_x \Psi_l$ and $\nabla_{xx} V_u =
\frac{\lambda}{\Psi_l^2}(\nabla_x \Psi_l) (\nabla_x \Psi_l)^T - \frac{\lambda}{\Psi_l}
\nabla_{xx}\Psi_l$. So, $u = \frac{\lambda}{\Psi_l} R^{-1}G^T \nabla_x \Psi_l$.
Then, from \eqref{eq:LV},
  \begin{align*}
     L(V_u) &= -\frac{\lambda}{\Psi_l}(\nabla_x \Psi_l)^T
            (f + \frac{\lambda}{\Psi_l} G R^{-1}G^T \nabla_x \Psi_l)\\
          & + \frac{1}{2} Tr\left(\left( \frac{\lambda}{\Psi_l^2}(\nabla_x \Psi_l )
            (\nabla_x \Psi_l)^T - \frac{\lambda}{\Psi_l} \nabla_{xx}\Psi_l \right)B \Sigma_\epsilon B\right)
  \end{align*}
  where $\partial_t V_u = 0$ because $V_u$ is not a function of time. 
Applying the assumption in \eqref{eq:noise-assumption} and simplifying,
  \begin{multline*}
      L(V_u)  = -\frac{\lambda}{\Psi_l}(\nabla_x \Psi_l)^T f 
	    -\frac{\lambda}{2 \Psi_l^2} (\nabla_{x}\Psi_l)^T \Sigma_t \nabla_{x}\Psi_l \\
                   -\frac{\lambda}{2\Psi_l} Tr\left(\left(\nabla_{xx}\Psi_l \right) \Sigma_t \right).
  \end{multline*}
From the first constraint in \eqref{eq:hjbjoin-sos},
  \begin{gather*}
     \frac{1}{\lambda} q \Psi_l - f^{T}(\nabla_{x}\Psi_l)
           -\frac{1}{2}Tr\left(\left(\nabla_{xx}\Psi_l\right)\Sigma_{t}\right) \leq 0 \implies\\
    - \frac{\lambda}{\Psi_l}(\nabla_{x}\Psi_l)^T f \leq -  
       q  + \frac{\lambda}{2 \Psi_l}Tr\left(\left(\nabla_{xx}\Psi_l\right)\Sigma_{t}\right). 
  \end{gather*}
Substituting this inequality into $L(V_u)$ and simplifying yields
  \begin{align}
	L(V_u) \leq -q -\frac{\lambda}{2 \Psi_l^2} (\nabla_{x}\Psi_l)^T \Sigma_t \nabla_{x}\Psi_l \leq 0 \label{eq:lvle0}
  \end{align}
because $q\ge 0$, $\lambda > 0$ and $\Sigma_t$ is positive semidefinite by definition. 
Since $V_u$ satisfies Definition \ref{def:stochastic_clf}, $V_u$ is a SCLF.	
\end{proof}

\begin{col} \label{col:local-stable}
The suboptimal controller $u^\epsilon = -R^{-1}G^T\nabla_x V_u$ is stabilizing in probability within the domain $\Omega$. If $\Sigma_t$ is a positive definite matrix, the suboptimal controller $u^\epsilon = -R^{-1}G^T\nabla_x V_u\ $ is asymptotically stabilizing in probability within the domain $\Omega$.
\end{col}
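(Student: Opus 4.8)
The plan is to recognize that this corollary is an almost immediate consequence of Theorem \ref{thm:lower-sclf} together with the two Khasminskii stability results, Theorem \ref{thm:stochastic_clf} and Theorem \ref{thm:stochastic_clf_asymp}. Theorem \ref{thm:lower-sclf} has already done the substantive work: it established that $V_u$ is an SCLF in the sense of Definition \ref{def:stochastic_clf}, exhibited the explicit controller $u^\epsilon$ from \eqref{eq:suboptimal}, and derived the key differential inequality \eqref{eq:lvle0}, namely $L(V_u) \le -q - \frac{\lambda}{2\Psi_l^2}(\nabla_x\Psi_l)^T \Sigma_t \nabla_x\Psi_l \le 0$, along with the two-sided bound $\mu(|x|) \le V_u(x) \le \mu'(|x|)$.

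For the first claim (stabilizing in probability), I would simply invoke Theorem \ref{thm:stochastic_clf}: the function $V_u$ is an SCLF and the controller $u^\epsilon$ satisfies $L(V_u) \le 0$ everywhere on $\mathcal{O} \setminus \{(0,t)\}$, so the origin is stable in probability and $u^\epsilon$ is a stabilizing controller. No further computation is required.

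For the second claim (asymptotic stability in probability), I would verify the three additional hypotheses of Theorem \ref{thm:stochastic_clf_asymp}. First, $u^\epsilon = -R^{-1}G^T\nabla_x V_u$ is time-invariant because $V_u$ depends on $x$ only. Second, the upper bound $V_u(x) \le \mu'(|x|)$ was already obtained in the proof of Theorem \ref{thm:lower-sclf}. Third, and crucially, I must upgrade the nonstrict inequality in \eqref{eq:lvle0} to the strict inequality $L(V_u) < 0$ for all $x \neq 0$: when $\Sigma_t$ is positive definite the quadratic form $(\nabla_x\Psi_l)^T \Sigma_t \nabla_x\Psi_l$ is strictly positive wherever $\nabla_x \Psi_l \neq 0$, while the positive definiteness of $q$ forces $-q(x) < 0$ at every $x \neq 0$; together these guarantee $L(V_u) < 0$ off the origin. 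With all three hypotheses in hand, Theorem \ref{thm:stochastic_clf_asymp} yields asymptotic stability in probability and that $u^\epsilon$ is an asymptotically stabilizing controller.

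The main obstacle is this third hypothesis. Strictness of the decrease is precisely what separates asymptotic stability from mere stability, and the role of the assumption $\Sigma_t \succ 0$ has to be pinned down carefully: it is the non-degeneracy of the diffusion that both lets the gradient term contribute a strictly negative quantity away from the origin and supplies the elliptic/non-degeneracy condition underlying Khasminskii's Theorem 5.5 and Corollary 5.1. I would take care to confirm that $L(V_u) < 0$ holds uniformly on $\Omega \setminus \{0\}$ — leaning on positive definiteness of $q$ at any point where $\nabla_x \Psi_l$ happens to vanish — rather than only generically, since Theorem \ref{thm:stochastic_clf_asymp} demands the strict decrease on all of $\mathcal{O}\setminus\{(0,t)\}$.
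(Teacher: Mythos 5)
Your proposal is correct and follows essentially the same route as the paper: the paper's proof simply states that the corollary is a direct consequence of the constructive proof of Theorem \ref{thm:lower-sclf} together with the Khasminskii stability theorems. In fact, you are somewhat more careful than the paper, which cites only Theorem \ref{thm:stochastic_clf}; your explicit verification of the hypotheses of Theorem \ref{thm:stochastic_clf_asymp} (time-invariance of $u^\epsilon$, the upper bound $V_u(x)\le\mu'(|x|)$, and strictness of $L(V_u)<0$ off the origin via positive definiteness of $q$) fills in exactly what the paper leaves implicit for the asymptotic claim.
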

\begin{proof}
This corollary is a direct consequence of the constructive proof of Theorem \ref{thm:lower-sclf} and Theorem \ref{thm:stochastic_clf}.
\end{proof}

\setcounter{figure}{1}
\begin{figure*}[b]
      \centering
	  \subfigure[ ]{\includegraphics[trim =12mm 65mm 15mm 70mm, clip,width=0.32\textwidth]{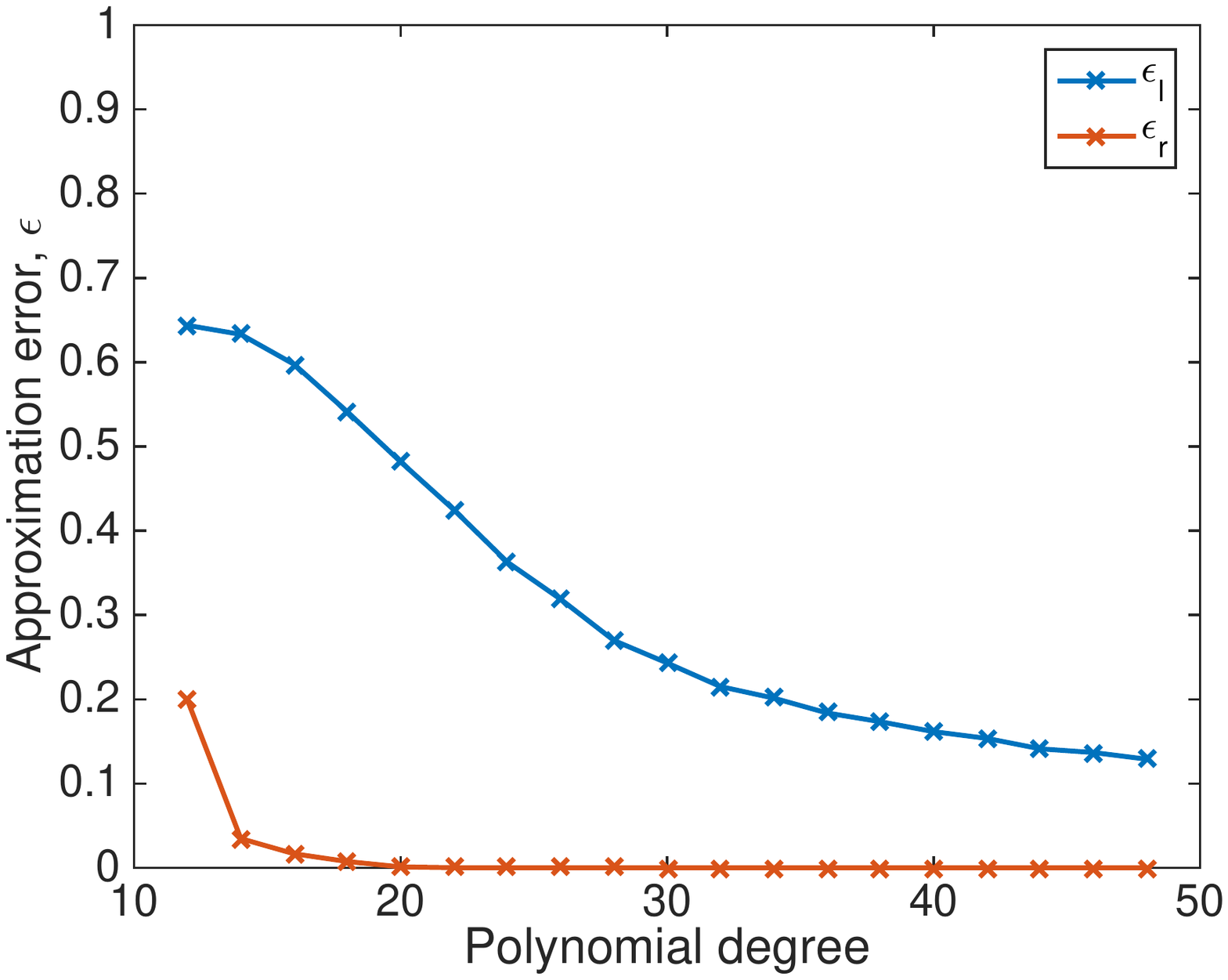}\label{fig:ex-1-solution-epsilon}  }  
	  \subfigure[ ]{\includegraphics[trim =12mm 65mm 15mm 70mm, clip,width=0.32\textwidth]{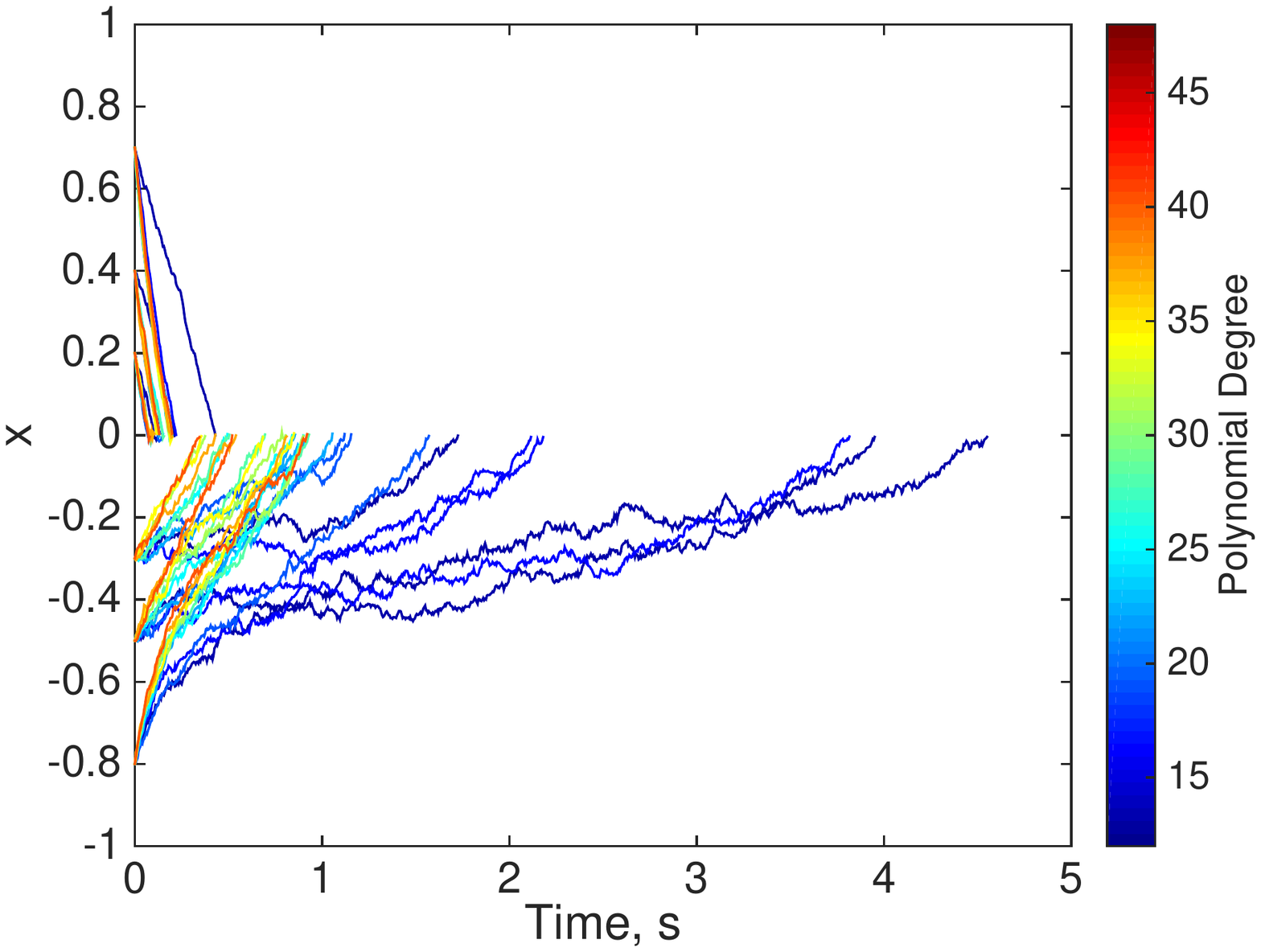} \label{fig:ex-1-solution-traj}}  
	  \subfigure[ ]{\includegraphics[trim =15mm 65mm 15mm 70mm, clip,width=0.32\textwidth]{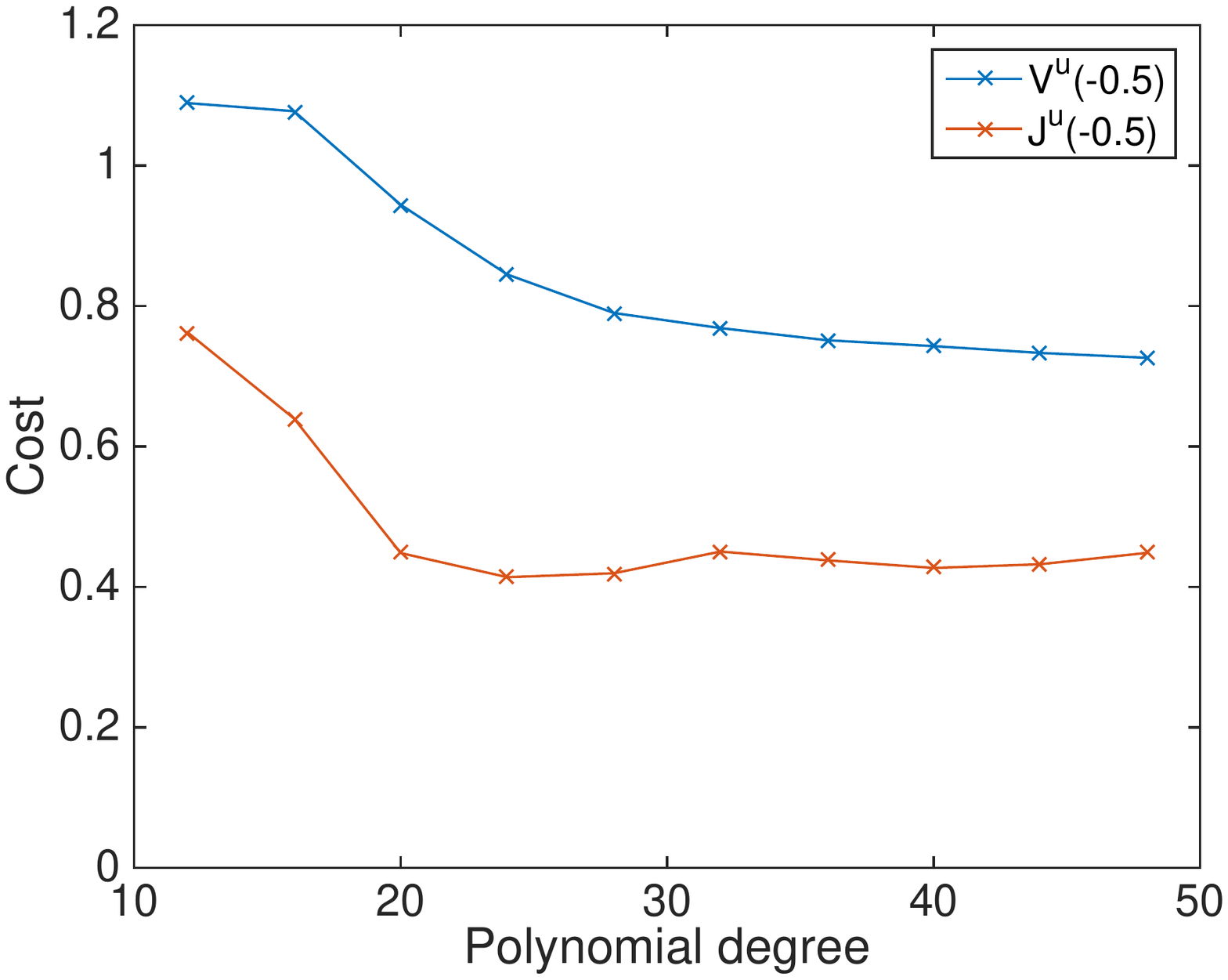}\label{fig:ex-1-solution-cost}} 	  
      \vspace{-.2cm}
      \caption{Computational results of system \eqref{eq:1dexample}. (a) Convergence of the objective function of  \eqref{eq:hjbjoin-sos} as the degree of polynomial increases. The approximation error for $x \le 0$ is denoted as $\epsilon_l$ and the approximation error for $x \ge 0$ is denoted as $\epsilon_r$. (b) Sample trajectories using controller computed from optimization problem \eqref{eq:hjbjoin-sos} with different polynomial degrees starting from six randomly chosen initial points. (c) The comparison between $J_u$ and $V_u$ for different polynomial degrees whereby $J_u$ is the expected cost and $V_u$ is the value function computed from optimization problem \eqref{eq:hjbjoin-sos}. The initial condition is fixed at $x_0 = -0.5$.}     
 \end{figure*}

\subsection{Bound on the Total Trajectory Cost} 

We conclude this section by showing that the expected total trajectory cost incurred by the system while 
operating under the suboptimal controller of (\ref{eq:suboptimal}) is bounded.

\begin{thm}\label{thm:total-traj-cost}
Given the control law $u^\epsilon=-R^{-1}G^T \nabla_x V_u$,
  \begin{equation}
    J_u \le V_u\leq V^* -\lambda \log\left(1-\min\left\{1,\frac{\epsilon}{\eta}\right\}\right)
  \end{equation}
where $J_u = \mathbb{E}_{\omega_t} [\phi_{T}(x_{T})+\int_{0}^{T}r(x_{t},u^\epsilon_{t}) dt]$, the expected cost of
the system when using the given control law, $u^\epsilon$.
\end{thm}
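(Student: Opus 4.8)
The plan is to split the claim into its two inequalities. The right-hand bound $V_u \le V^* - \lambda\log\left(1-\min\{1,\epsilon/\eta\}\right)$ is precisely the statement of Theorem \ref{thm:cost-upper}, so I would simply invoke it and devote the proof to the left-hand inequality $J_u \le V_u$. This asserts that the approximate value function overestimates the true expected cost accrued under its own induced controller $u^\epsilon = -R^{-1}G^T\nabla_x V_u$. This is a verification-type argument, and I would carry it out using Dynkin's formula (It\^o's lemma in expectation) applied to $V_u$ along the closed-loop trajectories of \eqref{eq:stochastic-dynamics} driven by $u^\epsilon$, recalling that the running cost is $r(x,u) = q(x) + \tfrac{1}{2}u^T R u$.

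First I would apply It\^o's lemma to $t \mapsto V_u(x_t)$. Since $V_u$ is time-invariant, the drift of $V_u(x_t)$ is exactly $L(V_u)$ as defined in \eqref{eq:LV}, while the stochastic integral $\int_0^t (\nabla_x V_u)^T B\, d\omega_s$ supplies the martingale part. Taking expectations up to the first-exit time $T$ yields the Dynkin identity $\mathbb{E}_{\omega_t}[V_u(x_T)] = V_u(x_0) + \mathbb{E}_{\omega_t}\left[\int_0^T L(V_u)(x_t)\,dt\right]$, the martingale term dropping out in expectation.

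The key step is to reinterpret the drift bound already obtained inside the proof of Theorem \ref{thm:lower-sclf}. Equation \eqref{eq:lvle0} gives $L(V_u) \le -q - \frac{\lambda}{2\Psi_l^2}(\nabla_x\Psi_l)^T\Sigma_t\nabla_x\Psi_l$. Using $\nabla_x V_u = -\frac{\lambda}{\Psi_l}\nabla_x\Psi_l$ together with the noise assumption $\Sigma_t = \lambda G R^{-1}G^T$ from \eqref{eq:noise-assumption}, the quadratic term equals $\frac{1}{2}(\nabla_x V_u)^T G R^{-1} G^T \nabla_x V_u = \frac{1}{2}(u^\epsilon)^T R u^\epsilon$. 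Hence $L(V_u) \le -\left(q + \frac{1}{2}(u^\epsilon)^T R u^\epsilon\right) = -r(x,u^\epsilon)$: the closed-loop drift of $V_u$ is dominated by the negative running cost. Substituting into the Dynkin identity and rearranging gives $V_u(x_0) \ge \mathbb{E}_{\omega_t}[V_u(x_T)] + \mathbb{E}_{\omega_t}\left[\int_0^T r(x_t,u^\epsilon_t)\,dt\right]$. Finally, the boundary constraint $\Psi_l \le \psi = e^{-\phi/\lambda}$ on $\partial\Omega$ in \eqref{eq:hjbjoin-sos}, through $V_u = -\lambda\log\Psi_l$ and monotonicity of the logarithm, becomes $V_u(x_T) \ge \phi(x_T)$ at the exit; replacing $\mathbb{E}_{\omega_t}[V_u(x_T)]$ by the smaller $\mathbb{E}_{\omega_t}[\phi(x_T)]$ then delivers $V_u(x_0) \ge J_u$.

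The main obstacle is not the algebra but the rigorous justification of the stochastic-calculus steps, because $T$ is the random first-exit time rather than a deterministic horizon. I would need $\mathcal{C}^2$ regularity of $V_u$, which holds since $\Psi_l$ is a polynomial kept strictly positive on $\Omega$ by $\Psi_l(0)=1$, the monotonicity constraints, and its nonnegativity on $\partial\Omega$, so that the logarithm is well defined; almost-sure finiteness of $T$ with finite expectation so the integrals converge, consistent with the stability established in Corollary \ref{col:local-stable}; and boundedness of $(\nabla_x V_u)^T B$ on the compact domain $\Omega$, which makes the stochastic integral a genuine martingale and legitimizes the optional-stopping form of Dynkin's formula, guaranteeing the martingale term vanishes in expectation.
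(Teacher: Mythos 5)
Your proposal is correct and follows essentially the same route as the paper's own proof: It\^o/Dynkin applied to $V_u$ along the closed loop, the drift bound \eqref{eq:lvle0} converted via \eqref{eq:noise-assumption} into $L(V_u) \le -q - \tfrac{1}{2}(u^\epsilon)^T R u^\epsilon$, the boundary inequality $V_u(x_T)\ge\phi(x_T)$ to discard the terminal term, and Theorem \ref{thm:cost-upper} for the right-hand inequality. If anything, your treatment is slightly more careful than the paper's, since you justify $V_u(x_T)\ge\phi(x_T)$ from the boundary constraint $\Psi_l\le\psi$ in \eqref{eq:hjbjoin-sos} and you flag the optional-stopping and martingale-integrability issues that the paper passes over silently.
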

\begin{proof}
By It\^{o}'s formula,
	\begin{gather*}
		dV_u(x_t) = L(V_u)(x_t) dt + \nabla_x V_u(x_t) B(x_t) d\omega_t. 	
	\end{gather*}
where $L(V)$ is defined in \eqref{eq:LV}.
Then,
	\begin{align}
		V_u(x_t)  = V_u(x_0,0) + &\int^t_0 L(V_u)(x_s) ds \nonumber\\
		& + \int^t_0 \nabla_xV_u(x_s) B(x_s) d\omega_s. \label{eq:temp}
	\end{align}
Take the expectation of this equation to get
	\begin{align*}
		\mathbb{E}_{\omega_t}[V_u(x_t)] &= V_u(x_0,0) +\mathbb{E}_{\omega_t}\left[\int^t_0 L(V_u)(x_s) ds \right]
	\end{align*}
whereby the last term of (\ref{eq:temp}) drops out because the noise is assumed to have zero mean. The expectations of the other terms return the same terms because they are deterministic.
From \eqref{eq:lvle0}, 
\begin{align*}
	L(V_u) &\le  -q -\frac{\lambda}{2 \Psi_l^2} (\nabla_{x}\Psi_l)^T \Sigma_t \nabla_{x}\Psi_l \\
	&=-q -\frac{1}{2} \left(\nabla_{x}V_u\right)^{T}GR^{-1}G^{T}\left(\nabla_{x}V_u\right)\\
	&= -q -\frac{1}{2} (u^\epsilon)^T R u^\epsilon
\end{align*}
where the first equality is given by the logarithmic transformation and the second equality is given by the control law $u^\epsilon=-R^{-1}G^T \nabla_x V_u$. 
	Therefore, 
	\begin{align*}
		\mathbb{E}_{\omega_t}[V_u(x_t)] &= V_u(x_0) +\mathbb{E}_{\omega_t}\left[\int^t_0 L(V_u)(x_s) ~ds\right] \\
		&\le V_u(x_0) -  \mathbb{E}_{\omega_t}\left[\int^t_0  q(x_s) +\frac{1}{2} (u^\epsilon_s)^T R u^\epsilon_s ~ds\right]  \\
		&= V_u(x_0) - J(x_0,u^\epsilon)+\mathbb{E}_{\omega_t}[\phi(x_T)]
	\end{align*}
Therefore, $V_u(x_0) - J(x_0,u^\epsilon) \ge \mathbb{E}_{\omega_t}[V_u(x_t)-\phi(x_t)]$. By definition, $V_u(x_T) \geq
\phi(x_T)$ for all $x_T \in \Omega$. Thus, $\mathbb{E}_{\omega_t}[V_u(x_T)-\phi(x_T)] \geq 0$. Consequently, $V_u(x_0) - J(x_0,u^\epsilon) \geq 0$, and $V_u(x_0) \geq J(x_0,u^\epsilon)$. Lastly, Theorem
\ref{thm:cost-upper} gives the second inequality in the theorem.
\end{proof}

\section{Numeric Examples}\label{sec:simulation}

This section studies the computational characteristics of our method using a scalar unstable system. The optimization parser
YALMIP \cite{yalmip} was used in conjunction with the semidefinite optimization package MOSEK \cite{andersen2000mosek} to solve the optimization problem \eqref{eq:hjbjoin-sos}. 

Consider the following unstable scalar nonlinear system
   \begin{equation} 
       dx=\left(-x^{3}+5x^{2}+3x+u\right)dt+d\omega \label{eq:1dexample}
   \end{equation}
 on the domain $x\in \Omega = \{x \mid -1 \le x \le 1 \}$. The noise model considered is Gaussian white noise with zero mean and variance $\Sigma_\epsilon=1$. The goal is to stabilize the system at the origin. Instead of zero, we choose the boundary at two ends of the domain to be  $\Psi(-1)=20e^{-10}$ and $\Psi(1)=20e^{-10}$. At the origin, the boundary is set as $\Psi(0) = 1$. We set $q=x^2$, and $R=1$. Because of the natural division of the domain, the solutions for both domains can be represented by smooth polynomials respectively, and solved independently.
 
 \setcounter{figure}{0}
 \begin{figure}[t]
    \centering
    \includegraphics[trim = 12mm 65mm 15mm 70mm, clip, width=0.45\textwidth]{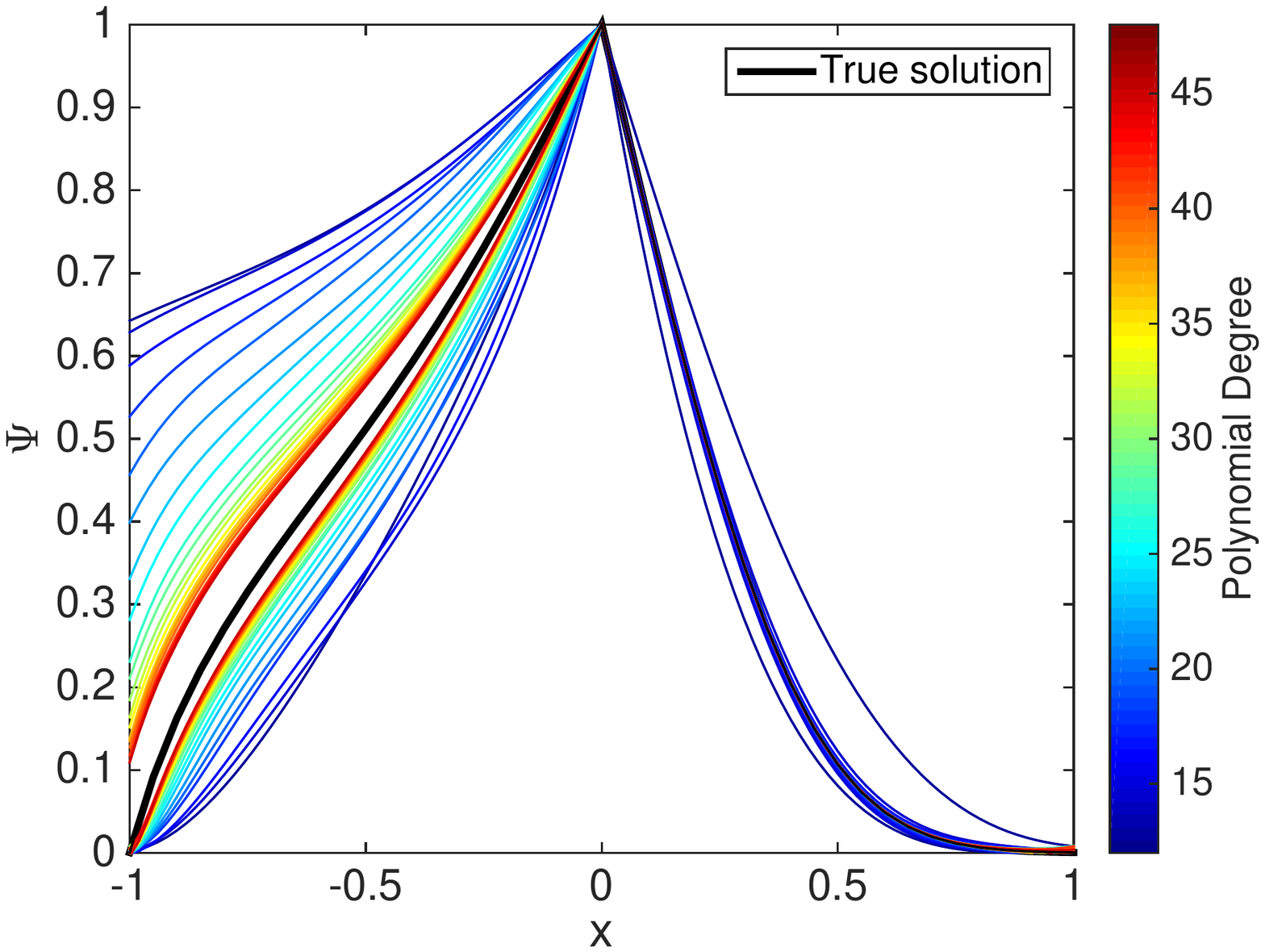}
    \caption{The desirability function for varying polynomial degree. The true solution is the black curve.}
       \label{fig:Desirability-ex-1}
  \end{figure}

 %
 %
 %

The desirability functions that results from solving \eqref{eq:hjbjoin-sos} for varying polynomial degrees are shown in Figure \ref{fig:Desirability-ex-1}. The optimization problem is not feasible for polynomial degree below 12. The true solution is computed using Mathematica. The kink at the origin is expected because the HJB PDE solution is not necessarily smooth at the boundary, and in this situation the origin is itself a boundary between the two domain halves. The approximation error $\epsilon$ for both partitions is shown in Figure \ref{fig:ex-1-solution-epsilon} for increasing polynomial degree. As seen in the plots, the approximation improves as the polynomial degree increases. 
 
To quantify the performance of the controller, a Monte Carlo experiment is performed. For each polynomial degree that is feasible, the controller obtained from $\Psi_l$ in optimization \eqref{eq:hjbjoin-sos} is implemented in 20 simulations of the system subject to random samples of Gaussian white noise with $\Sigma_\epsilon = 1$. The initial condition is fixed at $x_0 =-0.5$ and $t=0$. The continuous system is integrated numerically using Euler integration with step size of 0.005s. The simulation is terminated when the trajectories enter the interval $[-0.005,0.005]$ centered on the origin. Figure \ref{fig:ex-1-solution-cost} shows the comparison between $J_u(x_0,t)$ and $V_u(x_0,t)$ for different polynomial degrees whereby $J_u$ is the expected cost and $V_u$ is the value function computed from $\Psi_l$ in optimization \eqref{eq:hjbjoin-sos}. Figure \ref{fig:ex-1-solution-traj} illustrates several sample trajectories. In general, the trajectories converge earlier when the polynomial degree is higher. This observation is expected because the approximation error is smaller as the polynomial degree increases.

\addtolength{\textheight}{-10cm} 
\section{CONCLUSION}\label{sec:conclusion}

This paper proposes a novel method to solve the linear Hamilton Jacobi Equation of an optimal control problem with nonlinear, stochastic systems dynamics via sum of squares programming. Analytical results provide guarantees on the suboptimality of trajectories when using the
approximate solutions for controller design. Consequently, one can synthesize a suboptimal stabilizing controller to nonlinear, stochastic dynamical systems.

To improve the algorithm, the monomials of the polynomial approximation can be chosen strategically in order to decrease computation time while achieving high accuracy. Thus, a promising future direction is the synthesis of the work presented here with that of \cite{horowitz2014linear}, where
HJB equations were solved in dimension twelve and higher. 
To improve the numerical conditioning of these optimization
techniques, other numerical schemes are also under investigation \cite{horowitz2014admm}. 

There remains the question of the limitations placed by the structural constraint
\eqref{eq:noise-assumption}. A compelling research question is the suboptimality of controllers and
trajectories when approximating systems that do not adhere to the constraint, such as deterministic systems or those with noise in states without a control channel. 


\addtolength{\textheight}{-10cm}   
\bibliographystyle{IEEEtran}
\bibliography{references}

\vfill

\end{document}